\definecolor{labelkey}{rgb}{0,0.08,0.45}
\definecolor{refkey}{rgb}{0,0.6,0.0}
\definecolor{Brown}{rgb}{0.45,0.0,0.05}
\definecolor{lime}{rgb}{0.00,0.8,0.0}
\definecolor{lblue}{rgb}{0.5,0.5,0.99}
\newcommand{\nnn}{\ensuremath{{n\in{\mathbb N}}}}
\newcommand{\menge}[2]{\big\{{#1}~\big |~{#2}\big\}}
\newcommand{\mmenge}[2]{\bigg\{{#1}~\bigg |~{#2}\bigg\}}
\newcommand{\To}{\ensuremath{\rightrightarrows}}
\newcommand{\fenv}[1]%
{\ensuremath{\,\overrightarrow{\operatorname{env}}_{#1}}}
\newcommand{\benv}[1]%
{\ensuremath{\,\overleftarrow{\operatorname{env}}_{#1}}}
\newcommand{\emp}{\ensuremath{\varnothing}}
\newcommand{\scal}[2]{\left\langle{#1},{#2}  \right\rangle}
\newcommand{\zeroun}{\ensuremath{\left]0,1\right[}}
\newcommand{\RR}{\ensuremath{\mathbb R}}
\newcommand{\dom}{\ensuremath{\operatorname{dom}}}
\newcommand{\gr}{\ensuremath{\operatorname{gr}}}
\newcommand{\ran}{\ensuremath{\operatorname{ran}}}
\newcommand{\Fix}{\ensuremath{\operatorname{Fix}}}
\newcommand{\Id}{\ensuremath{\operatorname{Id}}}
\newcommand{\pinf}{\ensuremath{+\infty}}
\newcommand{\bx}{\ensuremath{\mathbf{x}}}
\newcommand{\bX}{\ensuremath{{\mathbf{X}}}}
\newcommand{\bR}{\ensuremath{{\mathbf{R}}}}
\newcommand{\bT}{\ensuremath{{\mathbf{T}}}}
\newcommand{\bN}{\ensuremath{{\mathbf{N}}}}
\newcommand{\bL}{\ensuremath{{\mathbf{L}}}}
\newcommand{\bJ}{\ensuremath{{\mathbf{J}}}}
\newcommand{\bD}{\ensuremath{{\boldsymbol{\Delta}}}}
\newcommand{\bS}{\ensuremath{{\mathbf{S}}}}
\newcommand{\bA}{\ensuremath{{\mathbf{A}}}}
\newcommand{\bB}{\ensuremath{{\mathbf{B}}}}
\newcommand{\by}{\ensuremath{\mathbf{y}}}
\newtheorem{theorem}{Theorem}[section]
\newtheorem{lemma}[theorem]{Lemma}
\newtheorem{corollary}[theorem]{Corollary}
\newtheorem{proposition}[theorem]{Proposition}
\newtheorem{definition}[theorem]{Definition}
\theoremstyle{plain}{\theorembodyfont{\rmfamily}
}
\theoremstyle{plain}{\theorembodyfont{\rmfamily}
}
\theoremstyle{plain}{\theorembodyfont{\rmfamily}
}
\theoremstyle{plain}{\theorembodyfont{\rmfamily}
}
\newtheorem{fact}[theorem]{Fact}
\theoremstyle{plain}{\theorembodyfont{\rmfamily}
\newtheorem{remark}[theorem]{Remark}}
\newcommand{\boxedeqn}[1]{%
    \[\fbox{%
        \addtolength{\linewidth}{-2\fboxsep}%
        \addtolength{\linewidth}{-2\fboxrule}%
        \begin{minipage}{\linewidth}%
        \begin{equation}#1\\[+4mm]\end{equation}%
        \end{minipage}%
      }\]%
  }
\newcommand{\algJA}{\text{\texttt{alg}}(J_A)}
\newcommand{\algJR}{{\text{\texttt{alg}}(\bJ\circ\bR)}}
\newcommand{\algT}{\text{\texttt{alg}}(\bT)}
\begin{document}

\title{\textrm{
Fixed Points of Averages
of Resolvents:\\
Geometry and Algorithms}}

\author{
Heinz H.\ Bauschke\thanks{Department of
Mathematics \& Statistics, University
of British Columbia,
Kelowna, B.C.\ V1V~1V7, Canada. E-mail:
\texttt{heinz.bauschke@ubc.ca}.},
Xianfu\ Wang\thanks{Department of Mathematics \& Statistics,
University of British Columbia,
Kelowna, B.C.\ V1V~1V7, Canada.
E-mail:  \texttt{shawn.wang@ubc.ca}.},
~and Calvin J.S.\ Wylie\thanks{Department of
Mathematics \& Statistics, University of
British Columbia,
Kelowna, B.C.\ V1V~1V7, Canada. E-mail:
\texttt{calvin.wylie@gmail.com}.}}

\date{February 7, 2011}
\maketitle

\vskip 8mm

\begin{abstract} \noindent
To provide generalized solutions if a given problem admits no actual
solution is an important task in mathematics and the natural sciences.
It has a rich history dating back to the early 19th century when Carl
Friedrich Gauss developed the method of least squares of a system of
linear equations --- its solutions can be viewed as fixed
points of averaged projections onto hyperplanes.
A powerful generalization of this problem is to find fixed
points of averaged resolvents (i.e., firmly nonexpansive mappings).

This paper concerns the relationship between the set of fixed points
of averaged resolvents and certain fixed point sets of compositions
of resolvents. It partially extends recent work for two
mappings on a question of C.\ Byrne. The analysis suggests a
reformulation in a product space.

Furthermore, two new algorithms are presented.
A complete convergence proof that is based on averaged mappings is
provided for the first algorithm. The second algorithm, which
currently has no convergence proof,
iterates a mapping that is not even nonexpansive.
Numerical experiments indicate the potential of these algorithms when
compared to iterating the average of the resolvents.
\end{abstract}

{\small
\noindent
{\bfseries 2010 Mathematics Subject Classification:}
{Primary 47H05, 47H09;
Secondary 47J25, 65K05, 65K10, 90C25.
}

\noindent {\bfseries Keywords:}
averaged mapping,
firmly nonexpansive mapping,
fixed point,
Hilbert space,
least squares solutions,
maximal monotone operator,
nonexpansive mapping,
normal equation,
projection,
resolvent,
resolvent average.

}

\section{Introduction}

Throughout this paper,
\boxedeqn{
\text{$X$ is
a real Hilbert space with inner
product $\scal{\cdot}{\cdot}$ }
}
and induced norm $\|\cdot\|$. We impose that $X\neq\{0\}$.
To motivate the results of this paper,
let us assume that $C_1,\ldots,C_m$ are finitely many nonempty closed
convex subsets of $X$, with projections (nearest point mappings)
$P_1,\ldots,P_m$.
Many problems in mathematics and the physical sciences can be recast
as the \textbf{convex feasibility problem} of finding a point in the
intersection $C_1\cap\cdots\cap C_m$.
However, in applications it may well be that this intersection
is empty. In this case, a powerful and very useful
generalization of the intersection
is the set of fixed points of the operator
\begin{equation}
\label{e:mproj}
X\to X\colon x\mapsto \frac{P_1x+\cdots +P_mx}{m}.
\end{equation}
(See, e.g., \cite{Comb94} for applications.)
Indeed, these fixed points are precisely the minimizers of the
convex function
\begin{equation}
\label{e:minis}
X\to\RR\colon x\mapsto \sum_{i=1}^{m}\|x-P_ix\|^2
\end{equation}
and---when each $C_i$ is a suitably described hyperplane---there
is a well known connection to the set of
\textbf{least squares solutions} in the sense of linear algebra (see
Appendix~A).

A problem open for a long time is to find precise relationships
between the fixed points of the operator defined in \eqref{e:mproj}
and the fixed points of the composition $P_m\circ\cdots \circ P_2\circ P_1$
when the intersection $C_1\cap\cdots\cap C_m$ is empty.
(It is well known that both fixed points sets coincide with
$C_1\cap\cdots\cap C_m$ provided this intersection is nonempty.)
This problem was recently explicitly stated and nicely discussed
in \cite[Chapter~50]{Byrne05} and \cite[Open~Question~2 on page~101 in
Subsection~8.3.2]{Byrne08}.
For other related work\footnote{In passing, we mention that when
$C_1$, $C_2$, $C_3$ are line segments forming a triangle in the
Euclidean plane, then the minimizer of \eqref{e:minis} is known
as the \textbf{symmedian point} (also known as the
Grebe-Lemoine point) of the given triangle;
see \cite[Theorem~349 on page~216]{Johnson}.}, see
\cite{BBL}, \cite{BauEd}, \cite{CEG}, \cite{DeP}, and the references
therein.
When $m=2$, the recent work \cite{WangBau} contains some
precise relationships. For instance,
the results in \cite[Section~3]{WangBau} show that
\begin{equation}
\Fix(P_2\circ P_1) \to
\Fix\big(\tfrac{1}{2}P_1+\tfrac{1}{2}P_2\big)
\colon
x\mapsto \tfrac{1}{2}x + \tfrac{1}{2}P_1x
\end{equation}
is a well defined bijection.

{\em
Our goal in this paper is two-fold.
First, we wish to find a suitable extension to describe
these fixed point sets when $m\geq 3$.
Second, we build on these insights to obtain algorithms
for finding these fixed points.
}

The results provided are somewhat surprising. While we completely
generalize some of the two-set work from \cite{WangBau}, the generalized
intersection is \emph{not} formulated as the fixed point set of a
simple composition, but rather as the fixed point set of a more
complicated operator described in a product space.
Nonetheless, the geometric insight obtained will turn out to be
quite useful in the design
of new algorithms that show better convergence properties when
compared to straight iteration of the averaged projection operator.
Furthermore, the results actually hold for very general firmly
nonexpansive operators---equivalently, resolvents of maximally
monotone operators---although the optimization-based interpretation
as a set of minimizers analogous to \eqref{e:minis} is then
unavailable. 

The paper is organized as follows.
In the remainder of this introductory section, we describe some
central notions fundamental to our analysis.
The main result of Section~\ref{s:2} is Theorem~\ref{t:SvsFix}
where we provide a precise correspondence between the fixed point set
of an averged resolvent $J_A$ and a certain set $\bS$ in a product space.
In Section~\ref{s:3}, it is shown that $\bS$ is in fact the fixed
point set of an averaged mapping (see Corollary~\ref{c:punch}).
This insight is brought to good use in Section~\ref{s:2new}, where
we design a new algorithm for finding a point in $\bS$ (and hence
in $\Fix J_A$) and where we provide a rigorous convergence proof.
Akin to the Gauss-Seidel variant of the Jacobi iteration in numerical
linear algebra, we propose another new algorithm.
Numerical experiments illustrate that
this heuristic  algorithm performs very well; however, it still
lacks a rigorous proof of convergence. An appendix concludes the paper.
The first part of the
appendix connects fixed points of averages of projections
onto hyperplanes to classical least squares solutions, while
the second part contains some more technical observations
regarding the heuristic method.
The notation we utilize is standard and as in
\cite{BC2011}, \cite{BorVanBook},
\cite{Rock70}, \cite{Rock98}, \cite{Simons1},
\cite{Simons2}, or \cite{Zalinescu} to which we also refer for
background.

Recall that a mapping
\begin{equation}
T\colon X\to X
\end{equation}
is \textbf{firmly nonexpansive}
(see \cite{Zara} for the first systematic study)
if
\begin{equation}
(\forall x\in X)(\forall y\in X)
\quad
\|Tx-Ty\|^2 + \|(\Id-T)x-(\Id-T)y\|^2 \leq \|x-y\|^2,
\end{equation}
where $\Id\colon X\to X\colon x\mapsto x$ denotes
the \textbf{identity operator}.
The prime example of firmly nonexpansive mappings
are \textbf{projection operators} (also known
as nearest point mappings) with respect to nonemtpy closed convex
subsets of $X$.
It is clear that if $T$ is firmly nonexpansive,
then it is \textbf{nonexpansive}, i.e.,
{Lipschitz continuous} with constant $1$,
\begin{equation}
(\forall x\in X)(\forall y\in X) \quad
\|Tx-Ty\|\leq\|x-y\|;
\end{equation}
the converse, however, is false (consider $-\Id$).
The set of \textbf{fixed points} of $T$ is
\begin{equation}
\Fix T = \menge{x\in X}{x=Tx}.
\end{equation}

The following characterization of firm nonexpansiveness is well known
and will be used repeatedly.

\begin{fact}
\label{f:firm}
{\rm (See, e.g., \cite{BC2011,GK,GR}.)}
Let $T\colon X\to X$.
Then the following are equivalent:
\begin{enumerate}
\item $T$ is firmly nonexpansive.
\item $\Id-T$ is firmly nonexpansive.
\item $2T-\Id$ is nonexpansive.
\item $(\forall x\in X)(\forall y\in X)$
$\|Tx-Ty\|^2 \leq \scal{x-y}{Tx-Ty}$.
\item
\label{f:firmsymm}
$(\forall x\in X)(\forall y\in X)$
$0\leq \scal{Tx-Ty}{(\Id-T)x-(\Id-T)y}$.
\end{enumerate}
\end{fact}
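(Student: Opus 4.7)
The plan is to reduce all five conditions to the single inequality in (iv) via straightforward inner-product identities; no deep geometric content is involved. To streamline the bookkeeping I would fix $x,y\in X$ and introduce the abbreviations $u=x-y$ and $v=Tx-Ty$, so that $(\Id-T)x-(\Id-T)y=u-v$ and $(2T-\Id)x-(2T-\Id)y=2v-u$.

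First I would handle (i)$\Leftrightarrow$(iv). Expanding
\begin{equation*}
\|u-v\|^2 = \|u\|^2 - 2\scal{u}{v} + \|v\|^2,
\end{equation*}
the defining inequality for firm nonexpansiveness becomes
\begin{equation*}
\|v\|^2 + \|u\|^2 - 2\scal{u}{v} + \|v\|^2 \leq \|u\|^2,
\end{equation*}
which simplifies to $\|v\|^2\leq\scal{u}{v}$, i.e.\ (iv). Next, (iii)$\Leftrightarrow$(iv) follows by squaring the nonexpansiveness inequality for $2T-\Id$:
\begin{equation*}
\|2v-u\|^2 = 4\|v\|^2 - 4\scal{u}{v} + \|u\|^2 \leq \|u\|^2,
\end{equation*}
which again collapses to $\|v\|^2\leq\scal{u}{v}$. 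For (v)$\Leftrightarrow$(iv), I would simply observe that $\scal{v}{u-v}=\scal{u}{v}-\|v\|^2$, so (v) reads $\|v\|^2\leq\scal{u}{v}$.

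Finally, (i)$\Leftrightarrow$(ii) is the cleanest: the expression $\|v\|^2+\|u-v\|^2$ characterising (i) is symmetric under the substitution $v\mapsto u-v$, and this substitution is exactly what happens when one replaces $T$ by $\Id-T$ (so that $v$ becomes $u-v$ and $u-v$ becomes $v$). Hence (i) and (ii) are literally the same inequality. Chaining the four equivalences established through (iv) then yields the full cycle.

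I do not anticipate a genuine obstacle: the entire argument is an exercise in expanding two squared norms and noting one symmetry. The only thing to watch out for is keeping the quantifiers $(\forall x\in X)(\forall y\in X)$ in place so that each equivalence is read as an equivalence of statements rather than of individual inequalities for a fixed pair.
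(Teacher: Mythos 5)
Your argument is correct and complete: reducing (i), (iii), and (v) to the common inequality $\|v\|^2\le\scal{u}{v}$ by expanding the squared norms, and obtaining (i)$\Leftrightarrow$(ii) from the symmetry of $\|v\|^2+\|u-v\|^2\le\|u\|^2$ under $v\mapsto u-v$, is exactly the standard route (the step for (iii) is a genuine equivalence since both sides of the nonexpansiveness inequality are nonnegative before squaring). The paper itself offers no proof --- it states this as a Fact with citations to the literature --- so there is nothing to compare against beyond noting that your elementary computation is precisely what those references contain.
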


Firmly nonexpansive mappings are also intimately tied
with maximally monotone operators.
Recall that a set-valued operator $A\colon X\To X$ (i.e., $(\forall x\in
X)$ $Ax\subseteq X$) with {graph} $\gr A$ is \textbf{monotone} if
\begin{equation}
(\forall (x,u)\in\gr A)
(\forall (y,v)\in\gr A)
\quad
\scal{x-y}{u-v}\geq 0,
\end{equation}
and that $A$ is \textbf{maximally monotone} if it is monotone and every proper
extension of $A$ fails to be monotone.
We write $\dom A =\menge{x\in X}{Ax\neq\varnothing}$
and $\ran A = A(X) = \bigcup_{x\in X}Ax$ for the
\textbf{domain} and \textbf{range} of $A$, respectively.
The \textbf{inverse} of $A$ is defined via
$\gr A^{-1} = \menge{(u,x)\in X\times X}{u\in Ax}$.
Monotone operators are ubiquitous in modern analysis and
optimization; see, e.g., the books
\cite{BC2011},
\cite{BorVanBook},
\cite{Brezis},
\cite{BurIus},
\cite{Simons1},
\cite{Simons2},
\cite{Zalinescu},
\cite{Zeidler2a},
\cite{Zeidler2b},
and
\cite{Zeidler1}.
Two key examples of maximally monotone operators are
continuous linear monotone operators and subdifferential operators
(in the sense of convex analysis) of
functions that are convex, lower semicontinuous, and proper.

Now let $A\colon X\To X$ be maximally monotone
and denote the associated \textbf{resolvent} by
\begin{equation}
J_A = (\Id+A)^{-1}.
\end{equation}
In \cite{Minty}, 
Minty made the seminal observation
that $J_A$ is in fact a firmly nonexpansive operator
from $X$ to $X$ and that, conversely, every firmly nonexpansive operator
arises this way:
\begin{fact}[Minty]
\label{f:Minty}
{\rm (See, e.g., \cite{Minty} or \cite{EckBer}.)}
Let $T\colon X\to X$ be firmly nonexpansive,
and let $A\colon X\To X$ be maximally monotone.
Then the following hold.
\begin{enumerate}
\item
$B= T^{-1}-\Id$ is maximally monotone (and $J_B = T$).
\item
$J_A$ is firmly nonexpansive (and $A = J_A^{-1}-\Id$).
\end{enumerate}
\end{fact}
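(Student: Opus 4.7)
The plan is to prove part (ii) first, since it is the more fundamental direction; part (i) will then follow from an algebraic inversion together with essentially the same reasoning read backwards. Fix $x\in X$ and analyze $J_Ax$. For uniqueness I would appeal directly to monotonicity of $A$: if $u_1,u_2\in J_Ax$, then $x-u_1\in Au_1$ and $x-u_2\in Au_2$, so $0\le \scal{u_1-u_2}{(x-u_1)-(x-u_2)}=-\|u_1-u_2\|^2$, which forces $u_1=u_2$. Non-emptiness of $J_Ax$ for every $x$ is the deep point: it amounts to the statement that $\ran(\Id+A)=X$ whenever $A$ is maximally monotone, which is Minty's classical surjectivity theorem. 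This is the main obstacle of the statement, and I would simply cite it; everything else reduces to algebra combined with the characterization in Fact~\ref{f:firm}.

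Granting that $J_A\colon X\to X$ is well defined and single-valued, firm nonexpansiveness is obtained as follows. For arbitrary $x,y\in X$, set $u=J_Ax$ and $v=J_Ay$, so that $(u,x-u)$ and $(v,y-v)$ lie in $\gr A$. Monotonicity of $A$ then yields $\scal{J_Ax-J_Ay}{(\Id-J_A)x-(\Id-J_A)y}\ge 0$, which is precisely condition~\ref{f:firmsymm} of Fact~\ref{f:firm}, and so $J_A$ is firmly nonexpansive. The identity $A=J_A^{-1}-\Id$ is then simply read off from $J_A=(\Id+A)^{-1}$ by taking set-valued inverses on both sides.

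For part (i), the key observation is the algebraic identity $\Id+B=\Id+(T^{-1}-\Id)=T^{-1}$ between set-valued operators, so that $J_B=(\Id+B)^{-1}=(T^{-1})^{-1}=T$. To see that $B$ is monotone, I would take $(x,u),(y,v)\in\gr B$, set $a=x+u$ and $b=y+v$, and note that $x=Ta$, $y=Tb$, $u=(\Id-T)a$, and $v=(\Id-T)b$; then applying Fact~\ref{f:firm}\ref{f:firmsymm} to $T$ yields $\scal{x-y}{u-v}\ge 0$, establishing monotonicity of $B$. Maximality follows from the Minty characterization once more: a monotone operator $B$ is maximally monotone as soon as $\Id+B$ is surjective, and here $\ran(\Id+B)=\ran(T^{-1})=\dom T=X$, since the firm nonexpansiveness of $T$ already forces $T$ to be defined on all of $X$. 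As before, the nontrivial input is the surjectivity criterion for maximal monotonicity; every remaining step is bookkeeping.
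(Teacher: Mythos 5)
The paper states this result as a \emph{Fact} and offers no proof of its own, only citations to Minty and to Eckstein--Bertsekas, so there is no in-paper argument to compare against; your proof is the standard one found in those sources and is correct. You rightly isolate the single deep ingredient---Minty's surjectivity theorem, i.e., that a monotone operator $A$ is maximally monotone if and only if $\ran(\Id+A)=X$, used in one direction to make $J_A$ total and in the other to get maximality of $B$---and reduce everything else to the symmetric characterization of firm nonexpansiveness in Fact~\ref{f:firm}\ref{f:firmsymm} together with routine set-valued algebra.
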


One of the motivations to study the correspondence
between firmly nonexpansive mappings and maximally monotone operators
is the very useful correspondence
\begin{equation}
A^{-1}(0) = \Fix J_A,
\end{equation}
where $A\colon X\To X$ is maximally monotone.

From now on we assume that
\boxedeqn{
\text{$A_1,\ldots,A_m$ are maximally monotone operators on $X$,
\quad where $m\in\{2,3,\ldots\}$, }
}
that
\boxedeqn{
\label{e:lambdas}
\text{$\lambda_1,\ldots,\lambda_m$ belong to $\zeroun$
such that $\sum_{i\in I} \lambda_i = 1$,
\quad where $I=\{1,2,\ldots,m\}$, }
}
and we set
\boxedeqn{
A = \left(\sum_{i\in I}\lambda_i J_{A_i}\right)^{-1}-\Id.
}
Then the definition of the resolvent yields
\begin{equation}
\label{e:J_A}
J_A = \sum_{i\in I} \lambda_iJ_{A_i};
\end{equation}
thus,
since it is easy to see that $J_A$ is firmly nonexpansive,
it follows from Fact~\ref{f:Minty} that $A$ is maximally monotone.
We refer to the operator $A$ as the \textbf{resolvent average}
of the maximally monotone operators $A_1,\ldots,A_m$ and
we note that $J_A$ is the weighted \textbf{average of the resolvents}
$J_{A_i}$.
The operator $J_A$ is the announced generalization of the
averaged projection operator considered in \eqref{e:mproj},
and $\Fix J_A$ is the generalization of the minimizers of the function
in \eqref{e:minis}.

This introductory section is now complete. In the next section,
we shall derive an alternative description of $\Fix J_A$.

\section{The Fixed Point Set Viewed in a Product Space}

\label{s:2}

It will be quite convenient to define numbers complementary to the
convex coefficients fixed in \eqref{e:lambdas}; thus, we let
\boxedeqn{
\label{e:mus}
\mu_i = 1-\lambda_i,
\quad
\text{for every $i\in I$.}
}
Several of the results will be formulated in the Hilbert product space
\boxedeqn{
\bX = X^m, \quad
\text{ with inner product }
\scal{\bx}{\by} = \sum_{i\in I}\scal{x_i}{y_i},
}
where
$\bx = (x_i)_{i\in I}$ and $\by = (y_i)_{i\in I}$
are generic vectors in $\bX$.
The set $\bS$, defined by
\boxedeqn{
\label{e:defS}
\bS = \mmenge{\bx=(x_i)_{i\in I}\in \bX}{(\forall i\in
I)\;\;x_i=J_{\mu_i^{-1}A_i}\bigg(\sum_{j\in I\smallsetminus\{i\}}
\frac{\lambda_j}{\mu_i} x_j\bigg)},
}
turns out to be fundamental in describing $\Fix J_A$.

\begin{theorem}[correspondence between $\bS$ and $\Fix J_A$]
\label{t:SvsFix}
The operator
\begin{equation}
\label{e:defL}
L\colon \bS \to \Fix J_A \colon \bx = (x_i)_{i\in I}\mapsto
\sum_{i\in I}\lambda_i x_i
\end{equation}
is well defined, bijective, and Lipschitz continuous with constant $1$.
Furthermore, the inverse operator of $L$ satisfies
\begin{equation}
L^{-1}\colon \Fix J_A \to \bS \colon x \mapsto \big(J_{A_i}x \big)_{i\in I}
\end{equation}
and $L^{-1}$ is Lipschitz continuous with constant $\sqrt{m}$.
\end{theorem}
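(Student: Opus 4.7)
The plan is to first unpack the defining relation of $\bS$ via Minty's equivalence $z = J_{T}(y) \iff y - z \in Tz$, and show that the somewhat awkward condition $x_i = J_{\mu_i^{-1}A_i}\big(\sum_{j\neq i}(\lambda_j/\mu_i)x_j\big)$ is algebraically equivalent to the cleaner statement $x_i = J_{A_i}(L(\bx))$. Concretely, the defining relation says $\mu_i\big(\sum_{j\neq i}(\lambda_j/\mu_i)x_j - x_i\big) \in A_i x_i$, i.e. $\sum_{j\neq i}\lambda_j x_j - \mu_i x_i \in A_i x_i$. Using $\mu_i = 1-\lambda_i$ and splitting $L(\bx)=\sum_j\lambda_j x_j = \lambda_i x_i + \sum_{j\neq i}\lambda_j x_j$, this left side collapses to $L(\bx) - x_i$, which is precisely the Minty certificate for $x_i = J_{A_i}(L(\bx))$.

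With this identification in hand, the well-definedness of $L$ is immediate from \eqref{e:J_A}: for $\bx\in\bS$,
\begin{equation*}
J_A(L(\bx)) = \sum_{i\in I}\lambda_i J_{A_i}(L(\bx)) = \sum_{i\in I}\lambda_i x_i = L(\bx),
\end{equation*}
so $L(\bx)\in\Fix J_A$. For the inverse, given $x\in\Fix J_A$, set $x_i := J_{A_i}x$; then $\sum_i\lambda_i x_i = J_A x = x$, so $L(\bx)=x$, and running the Minty step in reverse (using $x - x_i \in A_i x_i$) confirms $\bx\in\bS$. This simultaneously establishes bijectivity of $L$ and the formula $L^{-1}(x) = (J_{A_i}x)_{i\in I}$, since both compositions $L\circ L^{-1}$ and $L^{-1}\circ L$ are forced to be the identity by the two computations above.

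For the Lipschitz estimates, I would invoke the convexity of $\|\cdot\|^2$ (Jensen's inequality) to get
\begin{equation*}
\|L(\bx)-L(\by)\|^2 = \Big\|\sum_{i\in I}\lambda_i(x_i-y_i)\Big\|^2 \leq \sum_{i\in I}\lambda_i\|x_i-y_i\|^2 \leq \sum_{i\in I}\|x_i-y_i\|^2 = \|\bx-\by\|^2,
\end{equation*}
yielding the constant $1$ for $L$. For $L^{-1}$, nonexpansiveness of each $J_{A_i}$ (Fact~\ref{f:firm}) gives
\begin{equation*}
\|L^{-1}(x)-L^{-1}(y)\|^2 = \sum_{i\in I}\|J_{A_i}x - J_{A_i}y\|^2 \leq m\|x-y\|^2,
\end{equation*}
so the constant is $\sqrt{m}$.

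The only real obstacle is the bookkeeping in the first step: keeping track of $\mu_i$ versus $\lambda_i$, and recognizing that the sum $\sum_{j\neq i}\lambda_j x_j - \mu_i x_i$ telescopes into $L(\bx)-x_i$ after using $\lambda_i + \mu_i = 1$. Once that algebraic identity is secured, everything else is a routine assembly.
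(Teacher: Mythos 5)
Your proposal is correct and follows essentially the same route as the paper: the key step in both is the chain of equivalences showing that the defining relation of $\bS$ collapses, via $\mu_i=1-\lambda_i$, to $x_i = J_{A_i}(L\bx)$, after which well-definedness, bijectivity with the stated inverse, and the two Lipschitz estimates follow exactly as you assemble them. No gaps.
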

\begin{proof}
We proceed along several steps.

\texttt{Claim~1:} $(\forall \bx\in\bS)$ $L\bx \in \Fix J_A$
and  $\bx=\big(J_{A_i}L\bx\big)_{i\in I}$;
consequently, $L$ is well defined.\\

Let $\bx=(x_i)_{i\in I}\in\bS$ and set $\bar{x} = \sum_{i\in
I}\lambda_ix_i = L\bx$.
Using the definition of the resolvent, we have, for every $i\in I$,
\begin{subequations}
\begin{align}
\sum_{j\in I\smallsetminus\{i\}}
\frac{\lambda_j}{\mu_i} x_j \in \big(\Id + \mu_i^{-1}A_i\big)x_i
&\Leftrightarrow \sum_{j\in I\smallsetminus\{i\}}
{\lambda_j}x_j \in \mu_ix_i + A_ix_i \\
&\Leftrightarrow \sum_{j\in I\smallsetminus\{i\}}
{\lambda_j}x_j \in (1-\lambda_i)x_i + A_ix_i \\
& \Leftrightarrow \bar{x} = \sum_{j\in I}
{\lambda_j}x_j \in \big(\Id+A_i\big)x_i \\
&\Leftrightarrow x_i = J_{A_i}\bar{x} = J_{A_i}L\bx.
\end{align}
\end{subequations}
Hence $\bx = \big(J_{A_i}L\bx\big)_{i\in I}$\,, as claimed.
Moreover,
$(\forall i\in I)$ $\lambda_ix_i = \lambda_iJ_{A_i}\bar{x}$,
which, after summing over $i\in I$ and recalling \eqref{e:J_A}, yields
$\bar{x} = \sum_{i\in I}\lambda_ix_i = \sum_{i\in
I}\lambda_iJ_{A_i}\bar{x}=J_A\bar{x}$.
Thus $L\bx = \bar{x}\in\Fix J_A$ and Claim~1 is verified.

\texttt{Claim~2:} $(\forall x\in \Fix J_A)$ $\big(J_{A_i}x\big)_{i\in I}\in
\bS$.\\
Assume that $x\in\Fix J_A$ and set $(\forall i\in I)$ $y_i = J_{A_i}x$.
Then, using \eqref{e:J_A}, we see that
\begin{equation}
\label{e:0126a}
\sum_{i\in I}\lambda_i y_i = \sum_{i\in I}\lambda_i J_{A_i}x = J_Ax = x.
\end{equation}
Furthermore, for every $i\in I$,
and using \eqref{e:0126a} in the derivation of \eqref{e:0126b}
\begin{subequations}
\begin{align}
y_i = J_{A_i}x
&\Leftrightarrow x \in y_i+A_iy_i
\Leftrightarrow x-\lambda_iy_i \in \mu_iy_i+A_iy_i \\
&\Leftrightarrow \mu_i^{-1}\big(x-\lambda_iy_i\big) \in \Big(\Id +
\mu_i^{-1}A_i\Big)y_i\\
&\Leftrightarrow \mu_i^{-1}\sum_{j\in I\smallsetminus\{i\}}\lambda_jy_j \in \Big(\Id +
\mu_i^{-1}A_i\Big)y_i \label{e:0126b}\\
&\Leftrightarrow y_i=J_{\mu_i^{-1}A_i}\bigg(\sum_{j\in I\smallsetminus\{i\}}
\frac{\lambda_j}{\mu_i} y_j\bigg).
\end{align}
\end{subequations}
Thus, $(y_i)_{i\in I}\in\bS$ and Claim~2 is verified.

Having verified the two claims above, we now
turn to proving the statements announced.

First, let $x\in\Fix J_A$.
By Claim~2, $(J_{A_i}x)_{i\in I}\in\bS$.
Hence $L(J_{A_i}x)_{i\in I} = \sum_{i\in I}\lambda_iJ_{A_i}x
= J_Ax = x$ by \eqref{e:J_A}.
Thus, $L$ is surjective.

Second, assume that $\bx=(x_i)_{i\in I}$ and
$\by=(y_i)_{i\in I}$ belong to $\bS$ and that
$L\bx=L\by$.
Then, using Claim~1,
we see that
$\bx = (J_{A_i}L\bx)_{i\in I} = (J_{A_i}L\by)_{i\in I}
= \by$ and thus $L$ is injective.
Altogether, this shows that $L$ is bijective and
we also obtain the formula for $L^{-1}$.

Third, again let $\bx=(x_i)_{i\in I}$ and
$\by=(y_i)_{i\in I}$ be in $S$.
Using the convexity of $\|\cdot\|^2$, we obtain
\begin{subequations}
\begin{align}
\|L\bx-L\by\|^2 &= \Big\|\sum_{i\in I}\lambda_i(x_i-y_i)\Big\|^2
\leq\sum_{i\in I}\lambda_i\|x_i-y_i\|^2\\
&\leq\sum_{i\in I}\|x_i-y_i\|^2
=\|\bx-\by\|^2.
\end{align}
\end{subequations}
Thus, $L$ is Lipschitz continuous with constant $1$.

Finally, let $x$ and $y$ be in $\Fix J_A$.
Since $J_{A_i}$ is (firmly) nonexpansive for all $i\in I$,
we estimate
\begin{subequations}
\begin{align}
\big\|L^{-1}x-L^{-1}y\big\|^2
&=\Big\|\big(J_{A_i}x\big)_{i\in I} - \big(J_{A_i}x\big)_{i\in
I}\Big\|^2
= \sum_{i\in I}\big\| J_{A_i}x-J_{A_i}y\big\|^2\\
&\leq \sum_{i\in I}\|x-y\|^2 = m\|x-y\|^2.
\end{align}
\end{subequations}
Therefore, $L^{-1}$ is Lipschitz continuous with constant $\sqrt{m}$.
\end{proof}

\begin{remark}
Some comments regarding Theorem~\ref{t:SvsFix} are in order.
\begin{enumerate}
\item Because of the simplicity of the
bijection $L$ provided in Theorem~\ref{t:SvsFix},
the task of finding $\Fix J_A$ is essentially the same as finding
$\bS$.
\item
Note that when each $A_i$ is a normal cone operator $N_{C_i}$,
then the resolvents $J_{A_i}$ and $J_{\mu_{i}^{-1}A_i}$ simplify to
the projections $P_{C_i}$, for every $i\in I$.
\item
When $m=2$,
the set $\bS$ turns into
\begin{equation}
\bS = \menge{(x_1,x_2)\in\bX}{x_1 = J_{\lambda_2^{-1}A_1}x_2
\text{\;and\;} x_2 = J_{\lambda_1^{-1}A_2}x_1},
\end{equation}
and Theorem~\ref{t:SvsFix} coincides
with \cite[Theorem~3.6]{WangBau}.
Note that $(x_1,x_2)\in\bS$ if and only if
$x_2 \in \Fix\big(J_{\lambda_1^{-1}A_2}J_{\lambda_2^{-1}A_1}\big)$
and $x_1 = J_{\lambda_2^{-1}A_1}x_2$, which makes the connection
between the fixed point set of the composition of the two resolvents and
$\bS$.
It appears that this is a particularity of the case $m=2$;
it seems that there is no simple connection between fixed points of
$J_{\mu_m^{-1}A_m}J_{\mu_{m-1}^{-1}A_{m-1}}\cdots J_{\mu_1^{-1}A_1}$ and
$\Fix J_A$ when $m\geq 3$.
\end{enumerate}
\end{remark}

\section{Fixed Points of a Composition}

\label{s:3}

From now on, we let
\boxedeqn{
\label{e:defR}
\bR\colon\bX\to\bX\colon\bx=(x_i)_{i\in I}\mapsto
\bigg(\sum_{j\in I\smallsetminus\{i\}}
\frac{\lambda_j}{\mu_i} x_j\bigg)_{i\in I}
}
and
\boxedeqn{
\bJ\colon\bX\to\bX\colon\bx=(x_i)_{i\in I}\mapsto
\Big( J_{\mu_i^{-1}A_i}x_i\Big)_{i\in I}.
}

It is immediate from the definition of the set $\bS$ (see \eqref{e:defS}) that
\begin{equation}
\label{e:SJR}
\bS = \Fix(\bJ\circ \bR).
\end{equation}
We are thus ultimately interested in developing algorithms for finding
a fixed point of $\bJ\circ\bR$.
We start by collecting relevant information about the operator $\bR$.

\begin{proposition}
The adjoint of $\bR$ is given by
\begin{equation}
\label{e:0127a}
\bR^*\colon\bX\to\bX\colon
\bx=(x_i)_{i\in I}\mapsto
\bigg(\sum_{j\in I\smallsetminus\{i\}}
\frac{\lambda_i}{\mu_j} x_j\bigg)_{i\in I}
\end{equation}
and the set of fixed points of $\bR$ is the ``diagonal'' in $\bX$, i.e.,
\begin{equation}
\label{e:0127b}
\Fix\bR = \menge{(x)_{i\in I}\in\bX}{x\in X}.
\end{equation}
\end{proposition}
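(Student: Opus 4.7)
The proposition has two independent claims, and both reduce to straightforward computations using the definition of $\bR$ and the identity $\mu_i + \lambda_i = 1$.

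For the adjoint formula, the plan is to compute $\scal{\bR\bx}{\by}$ directly from the definition of $\bR$ and the product inner product on $\bX$, obtaining
\begin{equation*}
\scal{\bR\bx}{\by}=\sum_{i\in I}\Big\langle\sum_{j\in I\smallsetminus\{i\}}\tfrac{\lambda_j}{\mu_i}x_j,y_i\Big\rangle=\sum_{i\in I}\sum_{j\in I\smallsetminus\{i\}}\tfrac{\lambda_j}{\mu_i}\scal{x_j}{y_i},
\end{equation*}
and then relabel by swapping the indices $i\leftrightarrow j$ in the double sum. This produces exactly the expression $\scal{\bx}{\bR^*\by}$ with $\bR^*$ given by \eqref{e:0127a}. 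Since $\bR$ is linear and bounded, this pointwise identity uniquely determines its adjoint.

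For the fixed point set, the plan is a two-direction argument. The inclusion $\supseteq$ follows by substituting $\bx=(x)_{i\in I}$ into $\bR$ and using $\sum_{j\in I\smallsetminus\{i\}}\lambda_j=1-\lambda_i=\mu_i$, so each coordinate of $\bR\bx$ equals $x$. For the reverse inclusion $\subseteq$, start from the defining equation $\mu_i x_i=\sum_{j\in I\smallsetminus\{i\}}\lambda_j x_j$ for every $i\in I$, add $\lambda_i x_i$ to both sides, and invoke $\mu_i+\lambda_i=1$ to conclude
\begin{equation*}
x_i=\sum_{j\in I}\lambda_j x_j\quad\text{for every }i\in I.
\end{equation*}
Since the right-hand side is independent of $i$, all coordinates $x_i$ coincide, which places $\bx$ in the diagonal.

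Neither step presents a real obstacle: both are essentially bookkeeping once one writes out the definition of $\bR$. The only minor point to be careful with is the index swap in the adjoint computation, where one must verify that the constraint $j\neq i$ survives the relabeling (it does, since the constraint is symmetric in $i$ and $j$). I would present the two claims as two short arguments in the order above.
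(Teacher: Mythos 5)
Your proposal is correct and follows essentially the same route as the paper: the adjoint is identified by rewriting the double sum over pairs $(i,j)$ with $i\neq j$ and relabeling (the paper computes $\scal{\bx}{\bL\by}=\scal{\bR\bx}{\by}$ rather than starting from $\scal{\bR\bx}{\by}$, a purely cosmetic difference), and the fixed-point set is determined by the same two inclusions, using $\sum_{j\in I\smallsetminus\{i\}}\lambda_j=\mu_i$ for one direction and the add-$\lambda_i x_i$ manipulation for the other. No gaps.
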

\begin{proof}
Denote the operator defined in \eqref{e:0127a} by $\bL$,
and take $\bx = (x_i)_{i\in I}$ and $\by = (y_i)_{i\in I}$ in $\bX$.
Then
\begin{subequations}
\begin{align}
\scal{\bx}{\bL\by} &= \sum_{i\in I} \scal{x_i}{(\bL\by)_i}
= \sum_{i\in I} \sum_{j\in I\smallsetminus\{i\}}
\frac{\lambda_i}{\mu_j} \scal{x_i}{y_j}\\
&= \sum_{\menge{(i,j)\in I\times I}{i\neq j}}
\frac{\lambda_i}{\mu_j}\scal{x_i}{y_j}\\
&= \sum_{j\in I} \sum_{i\in I\smallsetminus\{j\}}
\frac{\lambda_i}{\mu_j} \scal{x_i}{y_j}
= \sum_{j\in I}\scal{(\bR\bx)_j}{y_j}\\
&= \scal{\bR\bx}{\by},
\end{align}
\end{subequations}
which shows that $\bR^*=\bL$ as claimed.

Next, let $x\in X$ and denote the right side of \eqref{e:0127b} by $\bD$.
Since
\begin{equation}
\label{e:0127c-}
(\forall i\in I)\quad
\sum_{j\in I\smallsetminus\{i\}} \mu_i^{-1}\lambda_j = 1,
\end{equation}it is clear that
\begin{equation}
\label{e:0127c}
\bD\subseteq \Fix\bR.
\end{equation}
Now let $\bx = (x_i)_{i\in I}\in \Fix\bR$ and set $\bar{x}=\sum_{i\in
I}\lambda_ix_i$.
Then $\bx=\bR\bx$, i.e., for every $i\in I$, we have
\begin{subequations}
\begin{align}
x_i = (\bx)_i = (\bR\bx)_i &\Leftrightarrow
x_i =
\sum_{j\in I\smallsetminus\{i\}}
\frac{\lambda_j}{\mu_i} x_j
\Leftrightarrow
\mu_ix_i = \sum_{j\in I\smallsetminus\{i\}} \lambda_jx_j\\
&\Leftrightarrow
(1-\lambda_i)x_i = \sum_{j\in I\smallsetminus\{i\}} \lambda_jx_j
\Leftrightarrow
x_i = \sum_{j\in I} \lambda_jx_j\\
&\Leftrightarrow
x_i = \bar{x}
\end{align}
\end{subequations}
by \eqref{e:mus}.
Hence $\bx=(\bar{x})_{i\in I}\in\bD$ and thus
\begin{equation}
\label{e:0127d}
\Fix\bR \subseteq\bD.
\end{equation}
Combining \eqref{e:0127c} and \eqref{e:0127d}, we obtain \eqref{e:0127b}.
\end{proof}

\begin{remark}
If $m=2$, then $\bR^*=\bR$.
However, when $m\geq 3$, one
has the equivalence  $\bR^*=\bR$ $\Leftrightarrow$
$(\lambda_i)_{i\in I}=(\tfrac{1}{m})_{i\in I}$.
\end{remark}

The following observation will be useful when discussing
nonexpansiveness of $\bR$.

\begin{lemma}
\label{l:CS}
We have
$\displaystyle 1 \leq m\sum_{i\in I} \lambda_i^2$;
furthermore, equality holds if and only if $(\forall i\in I)$
$\lambda_i=\frac{1}{m}$.
\end{lemma}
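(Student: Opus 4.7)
The plan is to recognize this as a direct consequence of the Cauchy--Schwarz inequality (or equivalently, the convexity of $t\mapsto t^2$) applied to the vector $(\lambda_i)_{i\in I}$ and the all-ones vector in $\RR^m$.

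Concretely, first I would apply Cauchy--Schwarz in the form
\begin{equation}
\bigg(\sum_{i\in I}\lambda_i\cdot 1\bigg)^{\!2} \leq \bigg(\sum_{i\in I}\lambda_i^2\bigg)\bigg(\sum_{i\in I}1^2\bigg).
\end{equation}
Since $\sum_{i\in I}\lambda_i = 1$ by \eqref{e:lambdas} and $\sum_{i\in I} 1 = m$, the left side is $1$ and the right side is $m\sum_{i\in I}\lambda_i^2$, which yields the desired inequality.

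For the equality case, I would invoke the standard characterization of equality in Cauchy--Schwarz: equality holds if and only if the vectors $(\lambda_i)_{i\in I}$ and $(1)_{i\in I}$ are linearly dependent, which (because the $\lambda_i$'s are not all zero, thanks to $\sum\lambda_i = 1$) forces $(\lambda_i)_{i\in I}$ to be a constant vector. Combining this with $\sum_{i\in I}\lambda_i=1$ gives $\lambda_i = 1/m$ for every $i\in I$. Conversely, when $\lambda_i = 1/m$ for all $i$, a direct substitution gives $m\sum_{i\in I}\lambda_i^2 = m\cdot m\cdot m^{-2} = 1$.

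I do not anticipate any real obstacle: the statement is essentially the two-line observation that the discrete $\ell^2$-norm of a probability vector is minimized by the uniform distribution. An equally short alternative would be to rewrite $m\sum \lambda_i^2 - 1 = m\sum(\lambda_i - 1/m)^2$ (expanding and using $\sum \lambda_i = 1$), from which both the inequality and the equality case are immediate.
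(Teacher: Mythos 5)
Your proof is correct and follows essentially the same route as the paper: Cauchy--Schwarz applied to $(\lambda_i)_{i\in I}$ and the all-ones vector, together with the standard characterization of equality and the normalization $\sum_{i\in I}\lambda_i=1$. The alternative identity $m\sum_{i\in I}\lambda_i^2-1=m\sum_{i\in I}(\lambda_i-\tfrac{1}{m})^2$ you mention is a nice self-contained variant, but it is not needed.
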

\begin{proof}
Indeed,
\begin{align}
1 = \sum_{i\in I} \lambda_i\cdot 1
\leq \Big(\sum_{i\in I}\lambda_i^2\Big)^{1/2}\Big(\sum_{i\in I} 1^2\Big)^{1/2}
&\Leftrightarrow
1 = 1^2 \leq  \Big(\sum_{i\in I}\lambda_i^2\Big) m,
\end{align}
and the result follows from the Cauchy--Schwarz inequality and its characterization of
equality.
\end{proof}

The next result is surprising as it shows that the actual values of the convex
parameters $\lambda_i$ matter when $m\geq 3$.

\begin{proposition}[nonexpansiveness of $\bR$]
\label{p:Rne}
The following hold.
\begin{enumerate}
\item
\label{p:Rnei}
If $m=2$, then $\bR\colon (x_1,x_2)\mapsto (x_2,x_1)$;
thus, $\bR$ is an isometry and nonexpansive.
\item
\label{p:Rneii}
If $m\geq 3$, then:
$\bR$ is nonexpansive if and only if
$(\forall i\in I)$ $\lambda_i=\frac{1}{m}$, in which case $\|\bR\|=1$.
\end{enumerate}
\end{proposition}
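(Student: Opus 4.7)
Part~\ref{p:Rnei} I would dispatch immediately: when $m=2$ we have $\mu_1=\lambda_2$ and $\mu_2=\lambda_1$, so \eqref{e:defR} reduces to $\bR(x_1,x_2)=(x_2,x_1)$, the coordinate swap, which is a linear isometry. The starting point for part~\ref{p:Rneii} is the observation that $\bR$ acts coordinate-wise through the $m\times m$ scalar matrix $M=(M_{ij})$ with $M_{ii}=0$ and $M_{ij}=\lambda_j/\mu_i$ for $i\ne j$; by testing on vectors of the form $(\alpha_i u)_{i\in I}$ for an arbitrary unit $u\in X$ and any $\alpha\in\RR^m$, one sees that $\|\bR\|_{\bX\to\bX}=\|M\|_{\RR^m\to\RR^m}$, so the whole question reduces to deciding when $\|M\|\le 1$.

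For the ``if'' direction, I would assume $\lambda_i=1/m$ for every $i\in I$, so that $\mu_i=(m-1)/m$ and $(\bR\bx)_i=\tfrac{1}{m-1}(s-x_i)$ with $s:=\sum_{i\in I}x_i$. A direct expansion yields
\begin{equation*}
\|\bR\bx\|^2=\frac{1}{(m-1)^2}\sum_{i\in I}\|s-x_i\|^2=\frac{(m-2)\|s\|^2+\|\bx\|^2}{(m-1)^2},
\end{equation*}
and the Cauchy--Schwarz bound $\|s\|^2\le m\|\bx\|^2$ (in the spirit of Lemma~\ref{l:CS}) gives $\|\bR\bx\|^2\le\|\bx\|^2$, with equality when $\bx$ lies on the diagonal of \eqref{e:0127b}; hence $\|\bR\|=1$.

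For the ``only if'' direction, suppose $\|M\|\le 1$. Each row of $M$ sums to $\mu_i^{-1}\sum_{j\in I\smallsetminus\{i\}}\lambda_j=1$ by \eqref{e:0127c-}, so $M\mathbf{1}=\mathbf{1}$ where $\mathbf{1}=(1,\ldots,1)\in\RR^m$. The standard Hilbert-space fact that a linear contraction and its adjoint share the same fixed vectors—applied via
\begin{equation*}
\|\mathbf{1}-M^*\mathbf{1}\|^2=\|\mathbf{1}\|^2-2\scal{M\mathbf{1}}{\mathbf{1}}+\|M^*\mathbf{1}\|^2\le 2\|\mathbf{1}\|^2-2\|\mathbf{1}\|^2=0
\end{equation*}
—forces $M^*\mathbf{1}=\mathbf{1}$. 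Reading off the $j$-th entry yields $\lambda_j\sum_{i\in I\smallsetminus\{j\}}\mu_i^{-1}=1$, equivalently $\mu_j^{-1}+\lambda_j^{-1}=\sum_{i\in I}\mu_i^{-1}$. Since the right-hand side is independent of $j\in I$ and $\mu_j^{-1}+\lambda_j^{-1}=1/(\lambda_j(1-\lambda_j))$, the product $\lambda_j(1-\lambda_j)$ is constant in $j$.

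The real work, and the precise point where the restriction $m\ge 3$ enters, is the combinatorial finish. Since $\lambda\mapsto\lambda(1-\lambda)$ is strictly concave on $\left]0,1\right[$, the $\lambda_j$ take at most two distinct values, say $a$ and $1-a$ with respective multiplicities $k$ and $m-k$ for some $k\in\{1,\ldots,m-1\}$ (otherwise they are already all equal, hence all $=1/m$ by $\sum_j\lambda_j=1$). Imposing $\sum_{j\in I}\lambda_j=1$ gives the linear constraint $(2k-m)a=k-m+1$; a short case analysis on the signs of $2k-m$ and $k-m+1$ shows that for $m\ge 3$ there is no solution with $a\in\left]0,1\right[$ and $a\neq 1/2$. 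This forces all $\lambda_j$ to coincide, hence to equal $1/m$, at which point the equality $\|\bR\|=1$ has already been established in the ``if'' direction. I expect this combinatorial step to be the main obstacle, since it is exactly here that the phenomenon distinguishing $m=2$ from $m\ge 3$ crystallises.
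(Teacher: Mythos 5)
Your proof is correct, and while part~\ref{p:Rnei} and the ``if'' half of part~\ref{p:Rneii} essentially match the paper (your identity $\sum_{i\in I}\|s-x_i\|^2=(m-2)\|s\|^2+\|\bx\|^2$ followed by Cauchy--Schwarz is a repackaging of the paper's convexity estimate, with the same equality case on the diagonal), your ``only if'' argument takes a genuinely different route. The paper argues by contrapositive with an explicit witness: for $\bx=(\mu_iu)_{i\in I}$ it computes $\|\bR^*\bx\|^2-\|\bx\|^2=(m-2)\bigl(-1+m\sum_{i\in I}\lambda_i^2\bigr)$, which is strictly positive by Lemma~\ref{l:CS} unless all $\lambda_i=\tfrac1m$. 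You instead use that the row sums of $M$ equal $1$, so $M\mathbf{1}=\mathbf{1}$, and that a linear contraction and its adjoint share fixed vectors, to force the column sums to equal $1$ as well; this makes $\lambda_j(1-\lambda_j)$ constant in $j$, and a combinatorial analysis of the two-value configuration $\{a,1-a\}$ under $\sum_{j\in I}\lambda_j=1$ (the constraint $(2k-m)a=k-m+1$, which I checked has no admissible solution for $m\geq3$ and degenerates to $0=0$ for $m=2$, consistently with part~\ref{p:Rnei}) finishes the job. Your route is longer but more structural: it identifies the column-sum condition as the exact obstruction and explains why $m=2$ is special, whereas the paper's witness vector is shorter and exhibits an explicit direction of expansion. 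Two small points of hygiene: the identity $\|\bR\|=\|M\|$ you assert needs the standard coordinatewise/orthonormal-basis argument for the inequality $\|\bR\|\leq\|M\|$, but you only ever use the easy direction $\|M\|\leq\|\bR\|$, which is exactly what testing on $(\alpha_iu)_{i\in I}$ gives; and the ``at most two values'' step needs only that $\lambda\mapsto\lambda(1-\lambda)$ is a nonconstant quadratic with roots summing to $1$, not strict concavity.
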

\begin{proof}
\ref{p:Rnei}:
When $m=2$, we have $\lambda_1=\mu_2$ and $\lambda_2=\mu_1$;
thus, the definition of $\bR$ (see \eqref{e:defR}) yields the announced
formula and it is clear that then $\bR$ is an isometry and hence
nonexpansive.

\ref{p:Rneii}:
Suppose that $m\geq 3$.
Assume first that $(\forall i\in I)$ $\lambda_i = \frac{1}{m}$;
hence, $\mu_i=1-\frac{1}{m} = (m-1)/m$.
Then
\begin{equation}
\label{e:0127e}
(\forall j\in I)\quad
\lambda_j \sum_{i\in I\smallsetminus \{j\}} \frac{1}{\mu_i}
= \frac{1}{m} \sum_{i\in I\smallsetminus \{j\}} \frac{1}{(m-1)/m} = 1.
\end{equation}
Now let $\bx = (x_i)_{i\in I}\in\bX$.
Using the definition of $\bR$ (see \eqref{e:defR}),
the convexity of $\|\cdot\|^2$ in \eqref{e:0127f},
and \eqref{e:0127e} in \eqref{e:0127g}, we obtain
\begin{subequations}
\begin{align}
\|\bR\bx\|^2 &= \sum_{i\in I} \big\|(\bR\bx)_i\big\|^2
=\sum_{i\in I} \bigg\| \sum_{j\in I\smallsetminus\{i\}}
\frac{\lambda_j}{\mu_i} x_j\bigg\|^2\\
&\leq  \sum_{i\in I}  \sum_{j\in I\smallsetminus\{i\}}
\frac{\lambda_j}{\mu_i} \|x_j\|^2 \label{e:0127f}\\
&=\sum_{\menge{(i,j)\in I\times I}{i\neq j}}
\frac{\lambda_j}{\mu_i}\|x_j\|^2\\
&=\sum_{j\in I} \lambda_j\|x_j\|^2 \sum_{i\in I\smallsetminus\{j\}}
\frac{1}{\mu_i}\\
&=\sum_{j\in I}\|x_j\|^2 \label{e:0127g}\\
&=\|\bx\|^2.
\end{align}
\end{subequations}
Since $\bR$ is linear, it follows that $\bR$ is nonexpansive;
furthermore, since $\Fix\bR\neq\{0\}$ by \eqref{e:0127b}, we then
have $\|\bR\|=1$.

To prove the remaining implication, we demonstrate the contrapositive
and thus assume that
\begin{equation}
\label{e:0127h}
\big(\lambda_i\big)_{i\in I} \neq \big(\tfrac{1}{m}\big)_{i\in I}.
\end{equation}
Take $u\in X$ such that $\|u\|=1$ and set
$(\forall i\in I)$ $x_i = \mu_i u$ and
$\bx = (x_i)_{i\in I}$.
We compute
\begin{subequations}
\begin{align}
\|\bx\|^2 &= \sum_{i\in I}\|x_i\|^2 =
\sum_{i\in I}\|\mu_iu\|^2 = \sum_{i\in I} \mu_i^2\\
&= \sum_{i\in I} (1-\lambda_i)^2
= \sum_{i \in I} \big(1 - 2\lambda_i +\lambda_i^2\big)\\
&=m-2 + \sum_{i\in I}\lambda_i^2.
\end{align}
\end{subequations}
Using \eqref{e:0127a},
the fact that $\|u\|=1$,
we obtain
\begin{subequations}
\begin{align}
\|\bR^*\bx\|^2 &= \sum_{i\in I} \lambda_i^2\bigg\|\sum_{j\in
I\smallsetminus\{i\}} \mu_j^{-1}x_j\bigg\|^2
=  \sum_{i\in I} \lambda_i^2\bigg\|\sum_{j\in
I\smallsetminus\{i\}} \mu_j^{-1}\mu_ju\bigg\|^2\\
&= \sum_{i\in I} \lambda_i^2 \big\|(m-1)u\big\|^2
= (m-1)^2\sum_{i\in I}\lambda_i^2
\end{align}
\end{subequations}
Altogether,
\begin{subequations}
\begin{align}
\|\bR^*\bx\|^2-\|\bx\|^2 &=
2-m + \big((m-1)^2 -1\big)\sum_{i\in I}\lambda_i^2\\
&= (m-2)\Big(-1 + m\sum_{i\in I}\lambda_i^2\Big).
\end{align}
\end{subequations}
Now $m\geq 3$ implies that $m-2>0$; furthermore, by \eqref{e:0127h} and
Lemma~\ref{l:CS}, $-1+m\sum_{i\in I}\lambda_i^2 > 0$.
Therefore,
\begin{equation}
\|\bR^*\bx\|>\|\bx\|.
\end{equation}
This implies $\|\bR^*\|>1$ and hence $\|\bR\|>1$ by
\cite[Theorem~3.9-2]{Krey}.
Since $\bR$ is linear, it cannot be nonexpansive.
\end{proof}

For algorithmic purposes, nonexpansiveness is a desirable property but it
does not guarantee the convergence of the iterates
to a fixed point (consider, e.g., $-\Id$).
The very useful notion of an averaged mapping, which is
intermediate between nonexpansiveness and firm nonexpansiveness,
was introduced by Baillon, Bruck, and Reich in \cite{BBR}.

\begin{definition}[averaged mapping]
Let $T\colon X\to X$. Then $T$ is \textbf{averaged}
if there exist a nonexpansive mapping $N\colon X\to X$ and
$\alpha\in\left[0,1\right[$ such that
\begin{equation}
T = (1-\alpha)\Id + \alpha N;
\end{equation}
if we wish to emphasis the constant $\alpha$, we say that
$T$ is \textbf{$\alpha$-averaged}.
\end{definition}

It is clear from the definition that every averaged mapping is
nonexpansive;
the converse, however, is false: indeed,
$-\Id$ is nonexpansive, but not averaged.
It follows from Fact~\ref{f:firm}
that every firmly nonexpansive mapping is $\tfrac{1}{2}$-averaged.

The class of averaged mappings is closed under compositions;
this is not true for firmly nonexpansive mappings: e.g., consider
two projections
onto two lines that meet at $0$ at a $\pi/4$ angle.
Let us record the following well known key properties.

\begin{fact}
\label{f:av}
Let $T$, $T_1$, and $T_2$ be mappings from $X$ to $X$,
let $\alpha_1$ and $\alpha_2$ be in $\left[0,1\right[$,
and let $x_0\in X$.
Then the following hold.
\begin{enumerate}
\item
\label{f:avi}
$T$ is firmly nonexpansive if and only if $T$ is $\tfrac{1}{2}$-averaged.
\item
\label{f:avii}
If $T_1$ is $\alpha_1$-averaged and $T_2$ is $\alpha_2$-averaged,
then $T_1\circ T_2$ is $\alpha$-averaged, where
\begin{equation}
\alpha = \begin{cases}
0, &\text{if $\alpha_1=\alpha_2=0$;}\\
\displaystyle \frac{2}{1 + 1/\max\{\alpha_1,\alpha_2\}}, &\text{otherwise}
\end{cases}
\end{equation}
is the harmonic mean of $1$ and $\max\{\alpha_1,\alpha_2\}$.
\item
\label{f:aviii}
If $T_1$ and $T_2$ are averaged, and $\Fix(T_1\circ T_2)\neq\emp$, then
$\Fix(T_1\circ T_2) = \Fix(T_1)\cap\Fix(T_2)$.
\item
\label{f:aviv}
If $T$ is averaged and $\Fix T\neq\emp$, then the sequence of iterates
$(T^nx_0)_\nnn$ converges weakly\footnote{When $T$ is firmly
nonexpansive, the weak convergence goes back at least to
\cite{Browder67}.} to a point in $\Fix T$; otherwise,
$\|T^nx_0\|\to\pinf$.
\end{enumerate}
\end{fact}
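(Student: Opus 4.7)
The plan is to derive all four parts from a single reformulation of averagedness. Writing $T=(1-\alpha)\Id+\alpha N$ and asking that $N=\alpha^{-1}(T-(1-\alpha)\Id)$ be nonexpansive is, after squaring and rearranging, equivalent to the inequality
\begin{equation*}
\|Tx-Ty\|^{2}+\frac{1-\alpha}{\alpha}\,\|(\Id-T)x-(\Id-T)y\|^{2}\le\|x-y\|^{2}\qquad(\forall x,y\in X),
\end{equation*}
which I will refer to as $(\ast)$. I would first record $(\ast)$ and then use it as the workhorse for parts \ref{f:avi}--\ref{f:aviii}; part \ref{f:aviv} is the classical Krasnoselskii--Mann theorem and is treated separately.

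Part \ref{f:avi} is immediate from $(\ast)$: with $\alpha=\tfrac{1}{2}$ the coefficient $(1-\alpha)/\alpha$ equals $1$ and $(\ast)$ becomes precisely the defining inequality of firm nonexpansiveness in Fact~\ref{f:firm}. For part \ref{f:avii}, apply $(\ast)$ to $T_{2}$ at $(x,y)$ and to $T_{1}$ at $(T_{2}x,T_{2}y)$, then chain the two estimates to obtain
\begin{equation*}
\|T_{1}T_{2}x-T_{1}T_{2}y\|^{2}\le\|x-y\|^{2}-\tfrac{1-\alpha_{2}}{\alpha_{2}}\|a\|^{2}-\tfrac{1-\alpha_{1}}{\alpha_{1}}\|b\|^{2},
\end{equation*}
where $a=(\Id-T_{2})x-(\Id-T_{2})y$ and $b=(\Id-T_{1})T_{2}x-(\Id-T_{1})T_{2}y$; note that $(\Id-T_{1}T_{2})x-(\Id-T_{1}T_{2})y=a+b$. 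To finish I must absorb a quantity $\tfrac{1-\alpha}{\alpha}\|a+b\|^{2}$ into the right-hand side. Using Young's inequality $\|a+b\|^{2}\le(1+t)\|a\|^{2}+(1+t^{-1})\|b\|^{2}$ with $t>0$, the problem reduces to two scalar inequalities; optimising $t$ pins down the smallest admissible $\alpha$, and a short computation yields the harmonic-mean formula $\alpha=2\max\{\alpha_{1},\alpha_{2}\}/(1+\max\{\alpha_{1},\alpha_{2}\})$. This parameter optimisation is the principal technical obstacle.

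For part \ref{f:aviii}, the inclusion $\Fix T_{1}\cap\Fix T_{2}\subseteq\Fix(T_{1}\circ T_{2})$ is trivial. For the reverse, pick any common fixed point $z\in\Fix T_{1}\cap\Fix T_{2}$ and any $x\in\Fix(T_{1}\circ T_{2})$, and apply $(\ast)$ to $T_{2}$ at $(x,z)$ (using $T_{2}z=z$) and to $T_{1}$ at $(T_{2}x,z)$ (using $T_{1}z=z$ and $T_{1}T_{2}x=x$, so that $(\Id-T_{1})T_{2}x=T_{2}x-x$). Combining the two inequalities causes $\|x-z\|^{2}$ to cancel and leaves
\begin{equation*}
\Big(\tfrac{1-\alpha_{1}}{\alpha_{1}}+\tfrac{1-\alpha_{2}}{\alpha_{2}}\Big)\|T_{2}x-x\|^{2}\le 0,
\end{equation*}
forcing $T_{2}x=x$; then $T_{1}x=T_{1}T_{2}x=x$, so $x\in\Fix T_{1}\cap\Fix T_{2}$.

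Finally, for \ref{f:aviv}, write $T=(1-\alpha)\Id+\alpha N$ so that $(T^{n}x_{0})_{\nnn}$ is a Krasnoselskii--Mann sequence for the nonexpansive $N$. When $\Fix T=\Fix N\neq\emptyset$ the orbit is Fej\'er monotone with respect to $\Fix N$, and Browder's demiclosedness principle for $\Id-N$ gives weak convergence to an element of $\Fix T$. When $\Fix T=\emptyset$, one invokes the Baillon--Bruck--Reich/Pazy dichotomy: for an averaged mapping the orbit is bounded if and only if a fixed point exists, so in the fixed-point-free case $\|T^{n}x_{0}\|\to+\infty$.
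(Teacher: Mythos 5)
Your plan of deriving everything from the inequality $(\ast)$ that characterizes $\alpha$-averagedness is a genuinely different (and more self-contained) route than the paper's, which proves nothing directly and simply cites Fact~\ref{f:firm} for \ref{f:avi} and the literature (\cite{BBR}, \cite{Comb04}, \cite{BC2011}, \cite{BR}, \cite{Opial67}) for the rest. Parts \ref{f:avi} and \ref{f:avii} of your argument are sound: taking $t=1$ in Young's inequality reduces the two scalar constraints to $2(1-\alpha)/\alpha\leq(1-\kappa)/\kappa$ with $\kappa=\max\{\alpha_1,\alpha_2\}$, which is exactly $\alpha\geq 2\kappa/(1+\kappa)$, the harmonic mean of $1$ and $\kappa$; and delegating \ref{f:aviv} to the Krasnoselskii--Mann theorem together with the bounded-orbit dichotomy is the same citation-level treatment the paper gives (you should still note explicitly that $(\ast)$ yields asymptotic regularity, $\|T^nx_0-T^{n+1}x_0\|\to 0$, which is what makes demiclosedness applicable).

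The genuine gap is in \ref{f:aviii}. You open the reverse inclusion by ``picking any common fixed point $z\in\Fix T_1\cap\Fix T_2$,'' but the hypothesis is only that $\Fix(T_1\circ T_2)\neq\emp$; nothing you have established guarantees that $\Fix T_1\cap\Fix T_2$ is nonempty, and if it is empty your argument never starts. What you have actually proved is the (correct and standard) statement under the stronger hypothesis $\Fix T_1\cap\Fix T_2\neq\emp$. The two hypotheses are not interchangeable: on $X=\RR$ take $T_1=P_{\{0\}}$ and $T_2=P_{\{1\}}$, both firmly nonexpansive and hence averaged; then $\Fix(T_1\circ T_2)=\{0\}\neq\emp$ while $\Fix T_1\cap\Fix T_2=\emp$, so the asserted equality fails. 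In other words, the item as stated needs its hypothesis read as nonemptiness of the intersection (this is the form in which the Bruck--Reich lemma and Opial's theorem are actually proved), and passing from nonemptiness of $\Fix(T_1\circ T_2)$ to a common fixed point is not something your chaining of $(\ast)$ can deliver. You should either adopt the corrected hypothesis or flag that the statement requires it.
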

\begin{proof}
\ref{f:avi}: This is well known and immediate from Fact~\ref{f:firm}.

\ref{f:avii}: The fact that the composition of averaged mappings is again
averaged is well known and implicit in the proof of
\cite[Corollary~2.4]{BBR}. For the exact constants, see
\cite[Lemma~2.2]{Comb04} or \cite[Proposition~4.32]{BC2011}.

\ref{f:aviii}:
This follows from \cite[Proposition~1.1, Proposition~2.1, and Lemma~2.1]{BR}.
See also \cite[Theorem~3]{Opial67} for the case when $\Fix T\neq\emp$.

\ref{f:aviv}:
This follows from \cite[Corollary~1.3 and Corollary~1.4]{BR}.
\end{proof}

\begin{theorem}[averagedness of $\bR$]
\label{t:Rav}
The following hold.
\begin{enumerate}
\item
\label{t:Ravi}
If $m=2$, then $\bR$ is not averaged.
\item
\label{t:Ravii}
If $m\geq 3$ and $(\forall i\in I)$ $\lambda_i=\tfrac{1}{m}$,
then $\bR = (1-\alpha)\Id + \alpha\bN$, where
$\alpha=\tfrac{m}{2m-2}$ and $\bN$ is an isometry; in particular,
$\bR$ is $\alpha$-averaged.
\end{enumerate}
\end{theorem}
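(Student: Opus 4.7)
The plan is to treat the two parts separately.

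For part~\ref{t:Ravi}, $\bR$ is the swap $(x_1,x_2)\mapsto(x_2,x_1)$ by Proposition~\ref{p:Rne}\ref{p:Rnei}, which is linear. Suppose for contradiction that $\bR=(1-\alpha)\Id+\alpha\bN$ with $\bN\colon\bX\to\bX$ nonexpansive and $\alpha\in\left[0,1\right[$. Since $X\neq\{0\}$ we have $\bR\neq\Id$, so $\alpha>0$; linearity of $\bR$ and $\Id$ forces $\bN(0)=0$. Now pick $x\in X\smallsetminus\{0\}$ and set $\bx=(x,-x)$, so that $\bR\bx=-\bx$. Solving gives
\[
\bN\bx=\alpha^{-1}\bigl(\bR\bx-(1-\alpha)\bx\bigr)=-\tfrac{2-\alpha}{\alpha}\,\bx,
\]
whence $\|\bN\bx-\bN(0)\|=\tfrac{2-\alpha}{\alpha}\|\bx\|>\|\bx\|$ since $\alpha<1$, contradicting the nonexpansiveness of $\bN$.

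For part~\ref{t:Ravii}, with $\lambda_i=\tfrac{1}{m}$ one has $\mu_i=\tfrac{m-1}{m}$ and $\lambda_j/\mu_i=\tfrac{1}{m-1}$ whenever $i\neq j$, so that $(\bR\bx)_i=\tfrac{1}{m-1}\sum_{j\in I\smallsetminus\{i\}}x_j$. I would set $\alpha:=\tfrac{m}{2m-2}$, note $1-\alpha=\tfrac{m-2}{2m-2}$, and define $\bN:=\alpha^{-1}(\bR-(1-\alpha)\Id)$, so that $\bR=(1-\alpha)\Id+\alpha\bN$ by construction. A short coordinate calculation, using $\sum_{j\in I\smallsetminus\{i\}}x_j=\sum_{j\in I}x_j-x_i$, yields
\[
(\bN\bx)_i\;=\;\tfrac{2}{m}\sum_{j\in I}x_j-x_i\;=\;2\bar{x}-x_i, \qquad \bar{x}:=\tfrac{1}{m}\sum_{j\in I}x_j.
\]
Hence $\bN\bx=2(\bar{x})_{i\in I}-\bx$, which is exactly the reflector $2P_{\bD}-\Id$ across the diagonal $\bD=\Fix\bR$ from \eqref{e:0127b}, and in particular a linear isometry. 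If one prefers to avoid the geometric identification, the isometry property follows directly from
\[
\|\bN\bx\|^2=\sum_{i\in I}\|2\bar{x}-x_i\|^2=4m\|\bar{x}\|^2-4\scal{\bar{x}}{\sum_{i\in I}x_i}+\|\bx\|^2=\|\bx\|^2,
\]
where the cross term collapses because $\sum_{i\in I}x_i=m\bar{x}$.

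There is essentially no substantive obstacle; the only subtle point is identifying the correct constant $\alpha=m/(2m-2)$, which is forced by matching coefficients in the identity $(1-2\alpha)x_i+\tfrac{2\alpha}{m}\sum_{j\in I}x_j=\tfrac{1}{m-1}\bigl(\sum_{j\in I}x_j-x_i\bigr)$. The resulting $\bN$ is then not merely an isometry but an involution, which also illuminates part~\ref{t:Ravi}: for $m=2$ the swap $\bR$ is itself a reflection across the diagonal, and so cannot admit a nontrivial convex combination decomposition through $\Id$ and any nonexpansive map.
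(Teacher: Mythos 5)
Your proposal is correct and follows essentially the same route as the paper: for (i) you derive the forced formula $\bN=\alpha^{-1}(\bR-(1-\alpha)\Id)$ and exhibit a vector it expands (the paper uses $(0,\alpha u)$ where you use the eigenvector $(x,-x)$), and for (ii) you use the identical $\alpha=\tfrac{m}{2m-2}$ and arrive at the same operator $\bN=-\Id+\tfrac{2}{m}L^*L=2P_{\bD}-\Id$, verifying the isometry by a direct norm expansion where the paper uses the identity $L^*LL^*L=mL^*L$. The reflector interpretation is a nice touch but the substance is the same.
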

\begin{proof}
\ref{t:Ravi}:
Assume that $m=2$.
By Proposition~\ref{p:Rne}\ref{p:Rnei},
$\bR\colon (x_1,x_2)\mapsto(x_2,x_1)$.
We argue by contradiction and thus assume that
$\bR$ is averaged, i.e.,
there exist a nonexpansive mapping $\bN\colon\bX\to\bX$
and $\alpha\in\left[0,1\right[$ such that
$\bR = (1-\alpha)\Id + \alpha\bN$. Since $\bR\neq\Id$,
it is clear that $\alpha>0$. Thus,
\begin{equation}
\bN\colon\bX\to\bX\colon
(x_1,x_2)\mapsto \alpha^{-1}
\big( x_2-x_1+\alpha x_1,x_1-x_2+\alpha x_2\big).
\end{equation}
Now take $u\in X$ such that $\|u\|=1$ and
set $\bx = (x_1,x_2) = (0,\alpha u)$.
Then
\begin{equation}
\|\bx\|^2 = \|0\|^2 + \|\alpha u\|^2 = \alpha^2
\end{equation}
and $\bN\bx = \big(u,(\alpha-1)u\big)$.
Thus,
\begin{equation}
\|\bN\bx\|^2 = \|u\|^2 + \|(\alpha-1)u\|^2 = 1+ (1-\alpha)^2 = \alpha^2 +
2(1-\alpha) >\alpha^2 =
\|\bx\|^2.
\end{equation}
Hence $\|\bN\|>1$ and, since $\bN$ is linear, $\bN$ cannot be nonexpansive.
This contradiction completes the proof of \ref{t:Ravi}.

\ref{t:Ravii}:
Assume that $m \geq 3$ and that $(\forall i\in I)$
$\lambda_i=\tfrac{1}{m}$.
For future reference, we observe that
\begin{equation}
\label{e:0128a}
(\forall i\in I)(\forall j\in I)
\quad
\frac{\lambda_j}{\mu_i} = \frac{\tfrac{1}{m}}{1-\tfrac{1}{m}}=\frac{1}{m-1}.
\end{equation}
We start by defining
\begin{equation}
L\colon\bX\to X\colon (x_i)_{i\in I}\mapsto \sum_{i\in I} x_i.
\end{equation}
Then it is easily verified that
\begin{equation}
L^* \colon X\to\bX\colon x\mapsto (x)_{i\in I}
\end{equation}
and hence that
\begin{equation}
\label{e:0128c}
L^*LL^*L = mL^*L.
\end{equation}
Now set
\begin{equation}
\alpha = \frac{m}{2m-2}
\quad\text{and}\quad
\bN = \alpha^{-1}\big( \bR - (1-\alpha)\Id\big).
\end{equation}
Then $\alpha\in\zeroun$ and $\bR = \alpha\bN + (1-\alpha)\Id$;
thus, it suffices to show that $\bN$ is an isometry.
Note that
\begin{equation}
\label{e:0128b}
\alpha -1 = -\alpha + \frac{1}{m-1}.
\end{equation}
Take $\bx = (x_i)_{i\in I}\in\bX$.
Using \eqref{e:defR}, \eqref{e:0128a}, and \eqref{e:0128b},
we obtain for every $i\in I$,
\begin{subequations}
\begin{align}
(\bN\bx)_i
&= \alpha^{-1}\big(- (1-\alpha)x_i+ (\bR\bx)_i \big)\\
&= \alpha^{-1}\bigg((\alpha-1)x_i+ \sum_{j\in I\smallsetminus\{i\}}
\frac{\lambda_j}{\mu_i} x_j \bigg)\\
&= \alpha^{-1}\bigg(-\alpha x_i +\frac{1}{m-1}x_i+ \sum_{j\in I\smallsetminus\{i\}}
\frac{1}{m-1} x_j \bigg)\\
&= \alpha^{-1}\bigg(-\alpha x_i + \sum_{j\in I} \frac{1}{m-1} x_j \bigg)\\
&= -x_i + \frac{\alpha^{-1}}{m-1}L\bx\\
&= -x_i + \frac{2}{m}L\bx;
\end{align}
\end{subequations}
hence, $\bN\bx = -\bx + \frac{2}{m}L^*L\bx$.
It follows that
$\bN = -\Id + \frac{2}{m}L^*L$ and thus $\bN^*=\bN$.
Using \eqref{e:0128c}, we now obtain
\begin{subequations}
\begin{align}
\bN^*\bN&=\bN\bN = \big(-\Id + \tfrac{2}{m}L^*L\big)\big(-\Id +
\tfrac{2}{m}L^*L\big)\\
&= \Id - \tfrac{2}{m}L^*L - \tfrac{2}{m}L^*L +
\tfrac{4}{m^2}\big(L^*LL^*L\big) \\
&= \Id - \tfrac{4}{m}L^*L + \tfrac{4}{m^2}\big(mL^*L\big)\\
&=\Id.
\end{align}
\end{subequations}
Therefore, $\|\bN\bx\|^2 = \scal{\bN\bx}{\bN\bx} =
\scal{\bx}{\bN^*\bN\bx} = \scal{\bx}{\bx}=\|\bx\|^2$
and hence $\bN$ is an isometry; in particular,
$\bN$ is nonexpansive
and $\bR$ is $\alpha$-averaged.
\end{proof}

We are now in a position to describe the set $\bS$
as the fixed point set of an averaged mapping.

\begin{corollary}
\label{c:punch}
Suppose that $m\geq 3$ and that $(\forall i\in I)$
$\lambda_i=\tfrac{1}{m}$. Then
$\bJ\circ\bR$ is $\tfrac{2m}{3m-2}$-averaged and
$\Fix(\bJ\circ\bR)=\bS$.
\end{corollary}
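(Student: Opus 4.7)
The plan is to assemble the corollary from results already established in the excerpt; essentially no new geometric insight is needed. The fixed point identity $\Fix(\bJ\circ\bR)=\bS$ is simply equation~\eqref{e:SJR}, which is an immediate unpacking of the definition of $\bS$ in \eqref{e:defS} via the operators $\bJ$ and $\bR$. So the only real task is to verify the averagedness constant $\tfrac{2m}{3m-2}$.

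First I would observe that $\bJ$ is firmly nonexpansive: it acts coordinatewise by the resolvents $J_{\mu_i^{-1}A_i}$, each of which is firmly nonexpansive by Fact~\ref{f:Minty}, and firm nonexpansiveness is preserved under the Cartesian product inner product on $\bX$ (Fact~\ref{f:firm}\ref{f:firmsymm} can be applied componentwise and summed). Hence, by Fact~\ref{f:av}\ref{f:avi}, $\bJ$ is $\tfrac{1}{2}$-averaged. Next, under the standing hypothesis that $m\geq 3$ and $\lambda_i=\tfrac{1}{m}$ for every $i\in I$, Theorem~\ref{t:Rav}\ref{t:Ravii} gives that $\bR$ is $\tfrac{m}{2m-2}$-averaged.

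Then I would apply the composition rule Fact~\ref{f:av}\ref{f:avii} with $\alpha_1=\tfrac{1}{2}$ and $\alpha_2=\tfrac{m}{2m-2}$. Since $\tfrac{m}{2m-2}-\tfrac{1}{2}=\tfrac{1}{2m-2}>0$, we have $\max\{\alpha_1,\alpha_2\}=\tfrac{m}{2m-2}$, and the harmonic mean formula yields
\begin{equation*}
\alpha=\frac{2}{1+\tfrac{2m-2}{m}}=\frac{2m}{3m-2},
\end{equation*}
as claimed. Finally, I would invoke \eqref{e:SJR} for the fixed point identity.

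The only potential obstacle is the first bullet---confirming that $\bJ$ is firmly nonexpansive on $\bX$---but this is routine since the defining inequality in Fact~\ref{f:firm} holds coordinatewise and the product norm is the sum of the coordinate norms squared. Everything else is pure bookkeeping.
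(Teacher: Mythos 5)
Your proposal is correct and follows essentially the same route as the paper: $\bJ$ is firmly nonexpansive hence $\tfrac12$-averaged, $\bR$ is $\tfrac{m}{2m-2}$-averaged by Theorem~\ref{t:Rav}\ref{t:Ravii}, the composition rule of Fact~\ref{f:av}\ref{f:avii} with $\max\{\alpha_1,\alpha_2\}=\tfrac{m}{2m-2}$ gives $\tfrac{2m}{3m-2}$, and the fixed point identity is \eqref{e:SJR}. The paper simply states that $\bJ$ is ``clearly firmly nonexpansive'' where you spell out the coordinatewise verification; otherwise the arguments coincide.
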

\begin{proof}
On the one hand, since $\bJ$ is clearly firmly nonexpansive,
$\bJ$ is $\tfrac{1}{2}$-averaged.
On the other hand, by Theorem~\ref{t:Rav}\ref{t:Ravii},
$\bR$ is $\tfrac{m}{2m-2}$-averaged.
Since $0<\tfrac{1}{2}<\tfrac{m}{2m-2}$, it follows from
Fact~\ref{f:av}\ref{f:avii} that $\bJ\circ\bR$ is
$\alpha$-averaged, where
\begin{equation}
\alpha = \frac{2}{1+1/(m/(2m-2))} = \frac{2m}{3m-2},
\end{equation}
as claimed. To complete the proof, recall \eqref{e:SJR}.
\end{proof}

\section{Two New Algorithms}

\label{s:2new}

In Section~\ref{s:2}, we saw that $\Fix J_A = L(\bS)$ (see
Theorem~\ref{t:SvsFix}), and in Section~\ref{s:3} we discovered
that $\bS=\Fix(\bJ\circ\bR)$ is the fixed point set of an averaged
operator. This analysis leads to new algorithms for finding
a point in $\Fix J_A$.

\begin{theorem}
\label{t:rigalg}
Suppose that $m\geq 3$ and that $(\forall i\in I)$
$\lambda_i=\tfrac{1}{m}$.
Let $\bx_0 = (x_{0,i})_{i\in I}\in\bX$ and generate the sequence
$(\bx_n)_\nnn$ by
\begin{equation}
\label{e:rigalg}
(\forall\nnn)\quad
\bx_{n+1} = (\bJ\circ\bR)\bx_n.
\end{equation}
Then exactly one of the following holds.
\begin{enumerate}
\item
\label{t:rigalgi}
$\Fix J_A\neq\emp$, $(\bx_n)_\nnn$ converges weakly to a point
$\bx = (x_i)_{i\in I}$ in $\bS$
and $(\sum_{i\in I} \lambda_i\bx_{n,i})_\nnn$ converges weakly
to $\sum_{i\in I} \lambda_ix_i\in\Fix J_A$.
\item
\label{t:rigalgii}
$\Fix J_A=\emp$ and $\|\bx_n\|\to\pinf$.
\end{enumerate}
\end{theorem}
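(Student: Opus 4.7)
The plan is to combine Corollary \ref{c:punch} with the general theory of averaged mappings (Fact \ref{f:av}\ref{f:aviv}) and then transport the conclusions from $\bX$ back to $X$ using the bijection $L$ of Theorem \ref{t:SvsFix}.

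More concretely, first I would observe that, by Corollary~\ref{c:punch}, $\bJ\circ\bR$ is averaged (with constant $\tfrac{2m}{3m-2}\in\left[0,1\right[$) and that $\Fix(\bJ\circ\bR)=\bS$. Applying Fact~\ref{f:av}\ref{f:aviv} to the iteration~\eqref{e:rigalg} yields the dichotomy: either $\bS=\emp$ and $\|\bx_n\|\to\pinf$, or $\bS\neq\emp$ and $(\bx_n)_\nnn$ converges weakly to some $\bx=(x_i)_{i\in I}\in\bS$. Next, by Theorem~\ref{t:SvsFix}, the operator $L$ is a bijection between $\bS$ and $\Fix J_A$, so $\bS=\emp$ is equivalent to $\Fix J_A=\emp$; this already matches the two cases in the theorem's statement with the emptiness clauses as written.

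To obtain the weak convergence statement for the $X$-valued sequence in case~\ref{t:rigalgi}, I would note that $L\colon\bX\to X$, being linear and Lipschitz continuous (by Theorem~\ref{t:SvsFix}), is a bounded linear operator between Hilbert spaces and hence weakly continuous. Therefore $\bx_n\weakly\bx$ implies
\begin{equation}
\sum_{i\in I}\lambda_ix_{n,i} = L\bx_n \weakly L\bx = \sum_{i\in I}\lambda_ix_i,
\end{equation}
and by Claim~1 of the proof of Theorem~\ref{t:SvsFix} the limit $L\bx$ lies in $\Fix J_A$. This gives \ref{t:rigalgi}.

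For case~\ref{t:rigalgii} there is nothing further to do: Fact~\ref{f:av}\ref{f:aviv} directly says that when $\Fix(\bJ\circ\bR)=\bS=\emp$, equivalently $\Fix J_A=\emp$, then $\|\bx_n\|\to\pinf$. I do not expect any serious obstacle here; the only mild point to be careful about is to invoke weak (not norm) continuity of $L$ to transfer the convergence $\bx_n\weakly\bx$ to $L\bx_n\weakly L\bx$, which is immediate since $L$ is a bounded linear map between Hilbert spaces. Everything else is an assembly of results already established in Sections~\ref{s:2} and~\ref{s:3}.
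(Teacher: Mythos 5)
Your proposal is correct and follows essentially the same route as the paper's own proof: invoke Corollary~\ref{c:punch} to see that $\bJ\circ\bR$ is averaged with $\Fix(\bJ\circ\bR)=\bS$, apply Fact~\ref{f:av}\ref{f:aviv} for the weak-convergence/divergence dichotomy, and use Theorem~\ref{t:SvsFix} together with the weak continuity of the (bounded linear extension of the) operator $L$ from \eqref{e:defL} to transfer the conclusion to $\Fix J_A$. The paper's proof is just a terser version of exactly this argument.
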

\begin{proof}
By Theorem~\ref{t:SvsFix}, $\bS\neq\emp$ if and only if $\Fix J_A\neq\emp$.
Furthermore, Corollary~\ref{c:punch} shows that
$\bS = \Fix(\bJ\circ\bR)$, where $\bJ\circ\bR$ is averaged.
The result thus follows from Fact~\ref{f:av}\ref{f:aviv},
Theorem~\ref{t:SvsFix}, and the weak continuity of the operator $L$ defined
in \eqref{e:defL}.
\end{proof}

\begin{remark}
The assumption that $m\geq 3$ in Theorem~\ref{t:rigalg} is critical:
indeed, suppose that $m=2$.
Then, by Proposition~\ref{p:Rne}\ref{p:Rnei}, $\bR\colon (x_1,x_2)\mapsto
(x_2,x_1)$. Now assume further that $A_1=A_2\equiv 0$.
Then $\bJ=\Id$ and hence $\bJ\circ\bR=\bR$. Thus, if
$y$ and $z$ are two distinct points in $X$ and the sequence
$(\bx_n)_\nnn$ is generated by iterating
$\bJ\circ\bR$ with a starting point $\bx_0 = (y,z)$,
then
\begin{equation}
(\forall\nnn)\quad
\bx_{n} = \begin{cases}
(y,z), &\text{if $n$ is even;}\\
(z,y), &\text{if $n$ is odd.}
\end{cases}
\end{equation}
Consequently,
$(\bx_n)_\nnn$ is a bounded sequence that is not weakly convergent.
On the other hand, keeping the assumption $m=2$ but allowing again for
general maximally monotone operators $A_1$ and $A_2$, and assuming
that $\Fix J_A\neq\emp$,
we observe that
\begin{equation}
\bJ\circ\bR\circ\bJ\circ\bR\colon
\bX\to\bX\colon
(x_1,x_2)\mapsto \big( J_{\lambda_2^{-1}A_1}J_{\lambda_1^{-1}A_2}x_1,
 J_{\lambda_1^{-1}A_2}J_{\lambda_2^{-1}A_1}x_2\big).
\end{equation}
Hence, by \cite[Theorem~5.3]{WangBau},
the \emph{even} iterates of $\bJ\circ\bR$ will converge weakly to
point $(\bar{x}_1,\bar{x}_2)$
with $\bar{x}_1 =  J_{\lambda_2^{-1}A_1}J_{\lambda_1^{-1}A_2}\bar{x}_1$ and
$\bar{x}_2 =  J_{\lambda_1^{-1}A_2}J_{\lambda_2^{-1}A_1}\bar{x}_2$.
However, $(\bar{x}_1,\bar{x}_2)\notin\bS$ in general.
\end{remark}

Just as the Gauss-Seidel iteration can
be viewed as a modification of the Jacobi iteration
where new information is immediately utilized
(see, e.g.,  \cite[Section~4.1]{Saad}),
we shall propose a similar modification of
the iteration of the operator $\bJ\circ\bR$ analyzed above.
To this end, we introduce,
for every $k\in I$,
the following operators from $\bX$ to $\bX$:
\boxedeqn{
\label{e:defRk}
\big(\forall \bx=(x_i)_{i\in I}\in\bX\big)(\forall i\in I)\quad
(\bR_k\bx)_i =
\begin{cases}
x_i, &\text{if $i\neq k$;}\\[+4 mm]
\displaystyle \sum_{j\in I\smallsetminus\{k\}} \frac{\lambda_j}{\mu_k} x_j,
&\text{if $i=k$,}
\end{cases}
}
and
\boxedeqn{
\label{e:defJk}
\big(\forall \bx=(x_i)_{i\in I}\in\bX\big)(\forall i\in I)\quad
(\bJ_k\bx)_i =
\begin{cases}
x_i, &\text{if $i\neq k$;}\\[+4 mm]
\displaystyle J_{\mu_k^{-1}A_k}x_k,
&\text{if $i=k$.}
\end{cases}
}
It follows immediately from the definition of $\bS$
(see \eqref{e:defS}) that
\begin{equation}
\bS = \bigcap_{k\in I} \Fix(\bJ_k\circ\bR_k).
\end{equation}
This implies
\begin{equation}
\label{e:0128d}
\bS \subseteq \Fix\big(\bJ_m\circ\bR_m\circ\cdots\circ
\bJ_1\circ\bR_1\big),
\end{equation}
and it motivates---but does not justify---to iterate
the composition
\boxedeqn{
\bT = \bJ_m\circ\bR_m\circ\cdots\circ
\bJ_1\circ\bR_1
}
in order to find points in $\bS$.

\begin{remark}
\label{r:bloodyback}
In general, the composition
$\bT = \bJ_m\circ\bR_m\circ\cdots\circ
\bJ_1\circ\bR_1$
is not nonexpansive:
indeed, assume that $(\forall k\in I)$ $A_k\equiv 0$
so that $\bJ_k=\Id$.
Then $\bT = \bR_m\circ\cdots\circ \bR_1$ and we show in
Appendix~B that this composition is not nonexpansive
and neither is any $\bR_k$.
\end{remark}

\begin{remark}
\label{r:bT}
One may verify that $\bJ_k\circ\bR_k$ is Lipschitz continuous with
constant $\sqrt{m/(m-1)}$ when $\big(\lambda_i\big)_{i\in I} =
\big(\tfrac{1}{m}\big)_{i\in I}$ (see Appendix~B).
In turn, this implies that
\begin{equation}
\bT \text{ is Lipschitz continuous with constant }
\Big(\frac{m}{m-1}\Big)^{m/2}.
\end{equation}
As $m\to\pinf$, the Lipschitz constant of $\bT$ decreases to
$\sqrt{\exp(1)}\approx 1.6487$.
\end{remark}

\begin{remark}[numerical experiments]
In our numerical experiments, we assumed that 
$X = \RR^{50}$, that $m=55$, and that $(\lambda_i)_{i\in I}=
(\tfrac{1}{m})_{i\in I}$. 
We considered $m$ hyperplanes and the associated 
normal cone operators; this corresponds to 
a mildly overdetermined system of linear equations and 
to resolvents that are projection mappings 
$(P_i)_{i\in I}$. 
As the aim is to find fixed points of the 
 the averaged resolvent $J_A$, 
which in this case is the (equally weighted) average of
the projections $(P_i)_{i\in I}$ (see \eqref{e:mproj} and
\eqref{e:J_A}),
we measured 
performance at the $n$ iteration of $x_n\in X$ by  
the relative error function in decibel (dB), 
i.e., by 
\begin{equation}\label{e:relprox}
10 \log_{10} \bigg( \frac{{\| J_Ax_n - x_n \|}^2}{{\| J_Ax_0 - x_0
\|}^2} \bigg).
\end{equation}
For all experiments, the starting point $x_0$ is the zero vector.
We compared three algorithms denoted 
$\algJA$, $\algJR$, and $\algT$, 
which correspond to iterating 
$J_A$, $\bJ \circ \bR$, and $\bT$, respectively.
The last two new algorithms operate in the product space $\bX$;
thus, we project the $n$th iterate down to $X$ via
$(x_n)_{i\in I} = (x_{n,i})_{i\in I}\mapsto \sum_{i\in
I}\lambda_ix_{n,i}$ to compare to $\algJA$. 
The random sets (i.e., the hyperplanes) 
were generated in 5 instances,
and the values of \eqref{e:relprox} were averaged for
each iteration number.  
These values are plotted in Figure~\ref{fig:1}.
\begin{figure}[H]
\centering
{\large \scalebox{0.7}{\input{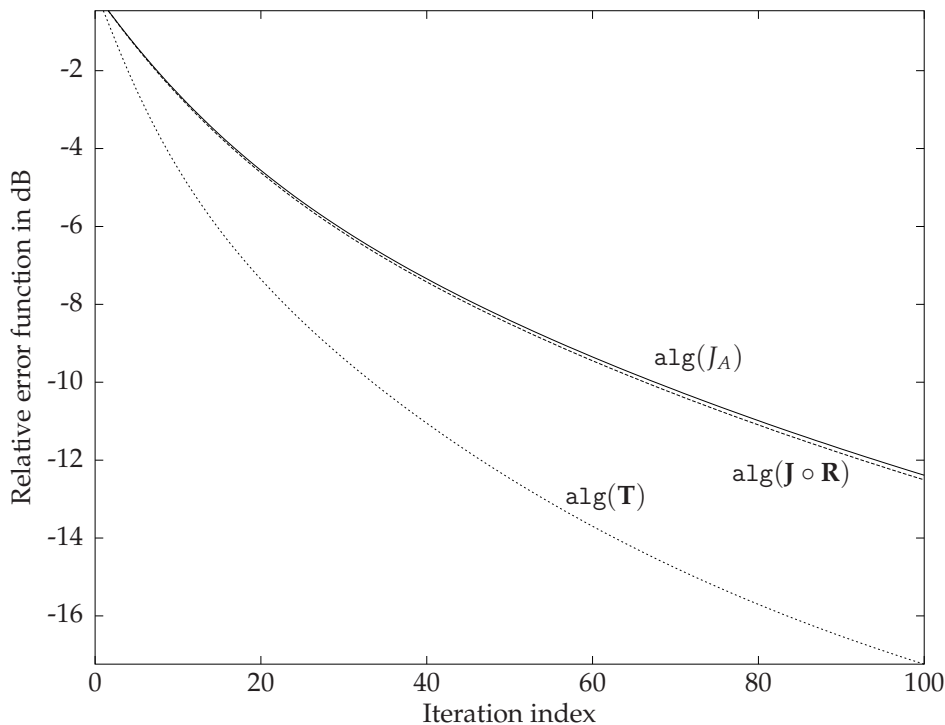}}}
\caption{Values of the relative error function for the three algorithms.}
\label{fig:1}
\end{figure}
As seen in Figure~\ref{fig:1}, 
the new rigorous algorithm $\algJR$
performs better than $\algJA$,
although the performance gain is slight.
(Convergence is guaranteed by 
Fact~\ref{f:av}\ref{f:aviv} and Theorem~\ref{t:rigalg}\ref{t:rigalgi}.)
Furthermore, the new heuristic algorithm $\algT$,
which currently lacks a convergence analysis (see
Remark~\ref{r:bloodyback}) substantially outperfoms
$\algJR$.
\end{remark}

Let us now list some open problems.

\begin{remark}[open problems]
\label{r:op}
Suppose that $m\geq 3$.
We do not know the answers to the following questions.
\begin{itemize}
\item[\textbf{Q1:}] Concerning \eqref{e:0128d}, is it actually true that
\begin{equation}
\bS \stackrel{}{=} \Fix\bT = \Fix\big(\bJ_m\circ\bR_m\circ\cdots\circ
\bJ_1\circ\bR_1\big)\,?
\end{equation}
\item[\textbf{Q2:}] Can one give simple sufficient or necessary
conditions for the convergence of the heuristic algorithm, i.e.,
the iteration of $\bT$, when $\Fix\bT\neq\emp$?
\item[\textbf{Q3:}] Under the most general assumption \eqref{e:lambdas},
we observed convergence in numerical experiments of the new rigorous
algorithm even though
there is no underlying theory---see Proposition~\ref{p:Rne}\ref{p:Rneii}
and Theorem~\ref{t:rigalg}. Can one provide simple sufficient
or necessary conditions for the convergence of the sequence defined
by \eqref{e:rigalg}?
\end{itemize}
\end{remark}

\begin{remark}
Concerning Remark~\ref{r:op},
we note that the first two questions posed  have
affirmative answers when $m=2$.
Indeed, one then computes
\begin{equation}
\bT\colon\bX\to\bX\colon
(x_1,x_2)\mapsto
\Big( J_{\lambda_2^{-1}A_1}x_2,
 J_{\lambda_1^{-1}A_2} J_{\lambda_2^{-1}A_1}x_2\Big)
\end{equation}
and hence $(x_1,x_2)\in\Fix\bT$ if and only if
$x_1 = J_{\lambda_2^{-1}A_1}x_2$ and
$x_2 = J_{\lambda_1^{-1}A_2}x_1$,
which is the same as requiring that
$(x_1,x_2)\in\bS$.
When $\bS=\Fix \bT\neq\emp$, then
the iterates of $\bT$ converge weakly to a fixed point by
\cite[Theorem~5.3(i)]{WangBau}.
\end{remark}

\section*{Appendix~A}

Most of this part of the appendix is part of the folklore;
however,
we include it here for completeness and because we have not quite
found a reference that makes all points we wish to stress.

We assume that $m\in\{1,2,\ldots\}$, that $I = \{1,2,\ldots,m\}$ and
that $(C_i)_{i\in I}$ is a family of closed hyperplanes given by
\begin{equation}
(\forall i\in I)
\qquad
C_i = \menge{x\in X}{\scal{a_i}{x}=b_i},
\quad
\text{where }
a_i\in X\smallsetminus\{0\}
\text{ and }
b_i\in\RR,
\end{equation}
with corresponding projections $P_i$.
Set $A\colon X\to\RR^m\colon x\mapsto \scal{a_i}{x}$ and
$b = (b_i)_{i\in I}\in\RR^m$.
Then $A^*\colon\RR^m\to X\colon (y_i)_{i\in I}\mapsto \sum_{i\in I}
y_i a_i$.
Denote, for every $i\in I$, the $i^\text{th}$ unit vector in
$\RR^m$ by $e_i$, and the projection $\RR^m\to\RR^m\colon
y\mapsto \scal{y}{e_i}e_i$ onto $\RR\, e_i$ by $Q_i$.
Note that
\begin{equation}
\label{e:0129b}
\sum_{i\in I} Q_i = \Id
\quad\text{and}\quad
(\forall (i,j)\in I\times I)
\;\;
Q_iQ_j = \begin{cases}
Q_i, &\text{if $i=j$;}\\
0, &\text{otherwise.}
\end{cases}
\end{equation}
We now assume that
\begin{equation}
\label{e:0129c}
(\forall i\in I)\quad
\|a_i\|=1,
\end{equation}
which gives rise to the pleasant representation
of the projectors as
\begin{equation}
\label{e:0129a}
(\forall i\in I)\quad
P_i \colon x\mapsto x - A^*Q_i(Ax-b)
\end{equation}
and to (see \eqref{e:minis}) 
\begin{equation}
(\forall x\in X)\quad
\|Ax-b\|^2 = 
\sum_{i\in I} \big|\scal{a_i}{x}-b_i\big|^2 = 
\sum_{i\in I} \big\|x-P_ix\big\|^2. 
\end{equation}
Now let $x\in X$.
Using \eqref{e:0129a} and \eqref{e:0129b}, we thus
obtain the following characterization
of fixed points of averaged projections:
\begin{subequations}
\label{e:0203a}
\begin{align}
& x \in\Fix\Big(\sum_{i\in I}\lambda_i P_i\Big)\\
&\Leftrightarrow
x = \sum_{i\in I}\lambda_iP_ix\\
&\Leftrightarrow
x = \sum_{i\in I}\lambda_i\big(  x - A^*Q_i(Ax-b)\big)\\
&\Leftrightarrow
x = \Big(\sum_{i\in I}  \lambda_ix \Big)-
A^*\sum_{i\in I}\lambda_iQ_i(Ax-b)\\
&\Leftrightarrow
A^*\Big(\sum_{i\in I}\lambda_iQ_i\Big)(Ax-b) = 0\\
&\Leftrightarrow
A^*\Big(\sum_{i\in I}\sqrt{\lambda_i}Q_i\Big)\Big(\sum_{i\in
I}\sqrt{\lambda_i}Q_i\Big)(Ax-b) = 0\\
&\Leftrightarrow
A^*\Big(\sum_{i\in I}\sqrt{\lambda_i}Q_i\Big)\Big(\sum_{i\in
I}\sqrt{\lambda_i}Q_i\Big)Ax =
A^*\Big(\sum_{i\in I}\sqrt{\lambda_i}Q_i\Big)\Big(\sum_{i\in
I}\sqrt{\lambda_i}Q_i\Big)b\\
&\Leftrightarrow
\bigg(\Big(\sum_{i\in I}\sqrt{\lambda_i}Q_i\Big)A\bigg)^*
\bigg(\Big(\sum_{i\in I}\sqrt{\lambda_i}Q_i\Big)A\bigg)x =
\bigg(\Big(\sum_{i\in I}\sqrt{\lambda_i}Q_i\Big)A\bigg)^*\Big(\sum_{i\in
I}\sqrt{\lambda_i}Q_i\Big)b\\
&\Leftrightarrow
\text{$x$ satisfies the normal equation of the system}\\
&\qquad \bigg(\Big(\sum_{i\in
I}\sqrt{\lambda_i}Q_i\Big)A\bigg)x = \Big(\sum_{i\in
I}\sqrt{\lambda_i}Q_i\Big)b\\
&\Leftrightarrow
\bigg(\Big(\sum_{i\in I}\sqrt{m\lambda_i}Q_i\Big)A\bigg)^*
\bigg(\Big(\sum_{i\in I}\sqrt{m\lambda_i}Q_i\Big)A\bigg)x\\
&\qquad =
\bigg(\Big(\sum_{i\in I}\sqrt{m\lambda_i}Q_i\Big)A\bigg)^*\Big(\sum_{i\in
I}\sqrt{m\lambda_i}Q_i\Big)b\\
&\Leftrightarrow
\text{$x$ satisfies the normal equation of the system}\\
&\qquad \bigg(\Big(\sum_{i\in
I}\sqrt{m\lambda_i}Q_i\Big)A\bigg)x = \Big(\sum_{i\in
I}\sqrt{m\lambda_i}Q_i\Big)b.
\end{align}
\end{subequations}
Note that when $(\lambda_i)_{i\in I}=(\tfrac{1}{m})_{i\in I}$, i.e.,
we have equal weights, then \eqref{e:0203a} and \eqref{e:0129b} yield
\begin{subequations}
\begin{align}
x \in\Fix\bigg(\frac{1}{m}\sum_{i\in I} P_i\bigg)
&\Leftrightarrow
\bigg(\Big(\sum_{i\in I}Q_i\Big)A\bigg)^*
\bigg(\Big(\sum_{i\in I}Q_i\Big)A\bigg)x =
\bigg(\Big(\sum_{i\in I}Q_i\Big)A\bigg)^*\Big(\sum_{i\in
I}Q_i\Big)b\\
&\Leftrightarrow
A^*Ax = A^*b\\
&\Leftrightarrow
\text{$x$ satisfies the normal equation of the system $Ax=b$}\\
&\Leftrightarrow
\text{$x$ is a least squares solution of the system $Ax=b$.}
\end{align}
\end{subequations}
\emph{In other words, the fixed points of the equally
averaged projections onto hyperplanes are precisely the classical
\textbf{least squares solutions} encountered in linear algebra,
i.e., the solutions to the classical
\textbf{normal equation} $A^*Ax=A^*b$ of the system $Ax=b$.}
The idea of least squares solutions goes back to the famous prediction
of the asteroid \emph{Ceres} due to Carl Friedrich Gauss in 1801
(see \cite[Subsection~1.1.1]{Bjorck} and
also \cite[Epilogue in Section~4.6]{Meyer}).

\noindent
\textbf{Example.}
Consider the following inconsistent linear system of equations
\begin{subequations}
\label{e:Byrne}
\begin{align}
x&=1\\
x&=2,
\end{align}
\end{subequations}
which was also studied by Byrne \cite[Subsection~8.3.2 on
page~100]{Byrne08}.
Here $m=2$ and \eqref{e:0129c} holds, and the above discussion yields
that $\Fix\big(\tfrac{1}{2}P_1+\tfrac{1}{2}P_2\big)$ and the set of least
squares solutions coincide, namely with the singleton
$\big\{\tfrac{3}{2}\big\}$.
Now change the representation to
\begin{subequations}
\begin{align}
2x&=2\\
x&=2,
\end{align}
\end{subequations}
so that \eqref{e:0129c} is violated. The set of fixed points remains
unaltered as the two hyperplanes $C_1$ and $C_2$ are unchanged and
thus it equals $\big\{\tfrac{3}{2}\big\}$.
However, the set of least squares solutions is now
$\big\{\tfrac{6}{5}\big\}$.
Similarly and returning to the first representation
in \eqref{e:Byrne},
the set of fixed points will changes if we consider different weights,
say $\lambda_1=\tfrac{1}{3}$ and $\lambda_2=\tfrac{2}{3}$:
indeed, we then obtain $\Fix\big(\tfrac{1}{3}P_1+\tfrac{2}{3}P_2\big)
=\big\{\tfrac{5}{3}\big\}$ while the set of least squares solutions
is still
$\big\{\tfrac{3}{2}\big\}$.

\section*{Appendix~B}

The proof of the following result is simple and hence omitted.

\noindent
\textbf{Lemma~B.1}
Let $(\alpha_{i,j})_{(i,j)\in I\times I}$ and
$(\beta_{i,j})_{(i,j)\in I\times I}$ be in $\RR^{m\times m}$,
and define
\begin{equation}
\bA\colon\bX\to\bX\colon (x_i)_{\in I} \mapsto
\Big(\sum_{j\in I}\alpha_{i,j}x_j\Big)_{i\in I}
\text{~and~}
\bB\colon\bX\to\bX\colon (x_i)_{\in I} \mapsto
\Big(\sum_{j\in I}\beta_{i,j}x_j\Big)_{i\in I}\,. 
\end{equation}
Then
\begin{equation}
\bA\circ\bB\colon
\bX\to\bX\colon
(x_i)_{\in I} \mapsto
\Big(\sum_{j\in I}\gamma_{i,j}x_j\Big)_{i\in I}\;,
\end{equation}
where
$(\forall (i,j)\in I\times I)$
$\gamma_{i,j} = \sum_{k\in I} \alpha_{i,k}\beta_{k,j}$.
Furthermore, the following hold:
\begin{enumerate}
\item
If for every $i\in I$,
$\sum_{j\in I} \alpha_{i,j}=1=\sum_{j\in I}\beta_{i,j}$,
then $\sum_{j\in I} \gamma_{i,j}=1$ as well.
\item
If for every $(i,j)\in I\times I$,
$\alpha_{i,j}\geq 0$ and $\beta_{i,j}\geq 0$,
then $\gamma_{i,j}\geq 0$ as well.
\end{enumerate}

\noindent
\textbf{Proof of Remark~\ref{r:bloodyback}.}
No $\bR_k$ is nonexpansive and neither is
$\bR_{m}\circ \cdots \circ \bR_{2}\circ \bR_{1}$.

\begin{proof}
Take $\bx=(x_i)_{i\in I}\in\bX$,
and let $i\in I$.
If $i\neq k$, then $(\bR_{k}\bx)_{i} = x_i$;
otherwise, $i=k$ and $(\bR_{k}\bx)_{k}$ is a convex
combination of the vectors $\{x_j\}_{j\in I\smallsetminus\{k\}}$.
In either case,  $(\bR_{k}\bx)_{k}$ is a convex
combination of the vectors $\{x_j\}_{j\in I\smallsetminus\{k\}}$.
Thus if $u\in X$ satisfies $\|u\|=1$ and
\begin{equation}
\label{e:0201c}
(\forall i\in I)\quad
x_i = \begin{cases}
u, &\text{if $i\neq k$;}\\
0, &\text{if $i=k$,}
\end{cases}
\end{equation}
then $\bR_k\bx = (u)_{i\in I}$ and hence
$\|\bR_k\bx\|^2 = \sum_{i\in I}\|u\|^2 = m > m-1 = \sum_{j\in
I\smallsetminus\{k\}}\|u\|^2 = \|\bx\|^2$.
Therefore $\bR_k$ is not nonexpansive.

Now assume that $k=1$ and that $\bx$ is defined as in
\eqref{e:0201c}.
The above reasoning shows that $\bR_1\bx = (u)_{i\in I}$.
In view of Lemma~B.1, $(u)_{i\in I}\in
\Fix(\bR_m\circ\cdots\circ\bR_2)$.
Hence $(\bR_m\circ\cdots\circ\bR_2\circ\bR_1)\bx = (u)_{i\in I}$ and
thus once again
$\|(\bR_m\circ\cdots\circ\bR_2\circ\bR_1)\bx \|^2 = m>m-1 =
\|\bx\|^2$.
This completes the proof.
\end{proof}

\noindent
\textbf{Proof of Remark~\ref{r:bT}.}
Let $\bx = (x_i)_{i\in I}$ and $\by = (y_i)_{i\in I}$
be in $\bX$, and take $k\in I$.
Using that $J_{\mu_k^{-1}A_k}$ is (firmly) nonexpansive
in \eqref{e:0201a},
and that $\|\cdot\|^2$ is convex in \eqref{e:0201b},
we obtain
\begin{subequations}
\begin{align}
&\qquad \big\|\big(\bJ_k\circ \bR_k\big)\bx - \big(\bJ_k\circ
\bR_k\big)\by\big\|^2\\[+3 mm]
&=
\bigg\|J_{\mu_k^{-1}A_k}\Big(\sum_{j\in I\smallsetminus \{k\}}
\frac{\lambda_j}{\mu_k^{}}x_j\Big)
- J_{\mu_k^{-1}A_k}\Big(\sum_{j\in I\smallsetminus \{k\}}
  \frac{\lambda_j}{\mu_k^{}}y_j\Big)\bigg\|^2 +
\sum_{j\in I\smallsetminus\{k\}} \|x_j-y_j\|^2\\
&\leq \label{e:0201a}
\bigg\|\Big(\sum_{j\in I\smallsetminus \{k\}}
\frac{\lambda_j}{\mu_k^{}}x_j\Big)
- \Big(\sum_{j\in I\smallsetminus \{k\}}
  \frac{\lambda_j}{\mu_k^{}}y_j\Big)\bigg\|^2 +
\sum_{j\in I\smallsetminus\{k\}} \|x_j-y_j\|^2\\
&=
\bigg\|\sum_{j\in I\smallsetminus \{k\}}
\frac{\lambda_j}{\mu_k^{}}\big(x_j-y_j\big) \bigg\|^2 +
\sum_{j\in I\smallsetminus\{k\}} \|x_j-y_j\|^2\\
&\leq \label{e:0201b}
\sum_{j\in I\smallsetminus \{k\}}
\frac{\lambda_j}{\mu_k^{}}\big\|x_j-y_j\big\|^2 +
\sum_{j\in I\smallsetminus\{k\}} \|x_j-y_j\|^2\\
&= \sum_{j\in I\smallsetminus \{k\}}
\frac{\lambda_j+\mu_k}{\mu_k^{}}\big\|x_j-y_j\big\|^2.
\end{align}
\end{subequations}
Since
$\big(\lambda_i\big)_{i\in I} = \big(\tfrac{1}{m}\big)_{i\in I}$,
we further deduce that
\begin{subequations}
\begin{align}
\big\|\big(\bJ_k\circ \bR_k\big)\bx - \big(\bJ_k\circ
\bR_k\big)\by\big\|^2
& \leq \sum_{j\in I\smallsetminus \{k\}}
\frac{m}{m-1}\big\|x_j-y_j\big\|^2\\
& \leq \sum_{j\in I}
\frac{m}{m-1}\big\|x_j-y_j\big\|^2\\
&=\frac{m}{m-1}\|\bx-\by\|^2,
\end{align}
\end{subequations}
which implies that $\bJ_k\circ \bR_k$ is Lipschitz continuous
with constant $\sqrt{m/(m-1)}$.
The rest of Remark~\ref{r:bT} now follows from elementary calculus.
\hfill $\quad \blacksquare$

\section*{Acknowledgments}
Heinz Bauschke was partially supported by the Natural Sciences and
Engineering Research Council of Canada and by the Canada Research Chair
Program.
Xianfu Wang was partially
supported by the Natural Sciences and Engineering Research Council
of Canada.
Calvin Wylie was partially supported
by the Irving K.\ Barber Endowment Fund.

\small

\end{document}